\newtheorem{thm}{Theorem}[section]
\newtheorem{lemma}[thm]{Lemma}
\newtheorem{conj}[thm]{Conjecture}
\newtheorem{prop}[thm]{Proposition}
\newtheorem{cor}[thm]{Corollary}
\theoremstyle{definition}
\theoremstyle{remark}
\numberwithin{equation}{section}
\newcommand\numberthis{\addtocounter{equation}{1}\tag{\theequation}}
\def\eps{\varepsilon}
\def\N{\mathbb{N}}
\def\F{\mathbb{F}}
\def\b1{\mathds{1}}
\def\Z{\mathbb{Z}}
\def\C{\mathbb{C}}
\DeclareMathOperator\spa{span}
\DeclareMathOperator\rank{rank}
\DeclareMathOperator{\Ima}{Im}
\def\E{\mathbb{E}}
\def\Pr{\mathbb{P}}
\begin{document}

\title{On Szemer\'{e}di's theorem with differences from a random set}

\author{Daniel Altman}
\address{Mathematical Institute, Radcliffe Observatory Quarter, Woodstock Rd, Oxford, OX2 6GG, United Kingdom}
\email{daniel.h.altman@gmail.com}

\subjclass[2010]{11B25}

\date{\today}

\begin{abstract}
We consider, over both the integers and finite fields, Szemer\'{e}di's theorem on $k$-term arithmetic progressions where the set $S$ of allowed common differences in those progressions  is restricted and random. Fleshing out a line of enquiry suggested by Frantzikinakis et al, we show that over the integers, the conjectured threshold for $\Pr(d \in S)$ for Szemer\'{e}di's theorem to hold a.a.s follows from a conjecture about how so-called dual functions are approximated by nilsequences. We also show that the threshold over finite fields is different to this threshold over the integers. 
\end{abstract}

\keywords{Szemer\'{e}di's theorem, arithmetic progressions}

\maketitle
\section{Introduction}\label{s:intro}
\subsection{Notation and definitions}
For a positive integer $N$, let $[N]$ denote the set $\{1, \ldots, N\}$. 

For $\delta >0$, a subset of $A$ of $[N]$ (respectively $\F_p^n$) will be said to have \textit{$\delta$-positive density} (or be \textit{$\delta$-dense}) if $|A| \geqslant \delta N$ (respectively $\geqslant \delta p^n$). A subset $B$ of $\N$ will be said to have \textit{positive upper density} if $\limsup_N |B \cap [N]|/N > 0$.

In a subset $A$ of an abelian group , a \textit{$k$-term arithmetic progression in $A$} (also \textit{$k$AP}) is a pair $(x,d)$ such that $x, x+d, \ldots, x+(k-1)d \in A$. For  $S \subset \N$, a $k$-term arithmetic progression has \textit{common difference in $S$} if, in the above notation, $d \in S$. 

When the ambient set is $[N]$ (respectively $\F_p^n$), we will say that \textit{Szemer\'{e}di's theorem with common differences in $S$ holds} if, for all $\delta>0, k\geqslant 2, N>N_0(k,\delta)$ (respectively $n>n_0(k,\delta)$) and sets $A\subset [N]$ (respectively $\subset \F_p^n$) of $\delta$-positive density, there exists a non-trivial $k$-term arithmetic progression in $A$ with common difference in $S$. When the ambient set is $\N$, \textit{Szemer\'{e}di's theorem with common differences in $S$ holds} means that all sets $B \subset \N$ with positive upper density contain a non-trivial $k$-term arithmetic progression with common difference in $S$.  

For a finite set $T$, we use the notation $\E_{x\in T}$ to denote the average over $T$, that is, $\frac{1}{|T|} \sum_{x \in T}$. We will often suppress the set $T$ and write $\E_x$ when the ambient set for $x$ is clear from context.

\subsection{Context}
In 1953, Roth \cite{Rot53} showed that sets of integers with positive upper density contain non-trivial 3-term arithmetic progressions. The result was famously extended to arbitrarily long arithmetic progressions by Szemer\'{e}di in 1975 \cite{Sze75}. It is well known that this is equivalent to the finitary formulation which asserts that, for $N$ sufficiently large in terms of $k$ and $\delta$, all $\delta$-dense subsets of $[N]$ contain non-trivial $k$APs.  

A natural generalization is to consider under what conditions Szemer\'{e}di's theorem is true when the set $S$ of allowed common differences in arithmetic progressions is restricted. It transpires that Szemer\'{e}di's theorem holds with common differences restricted to some fairly sparse sets $S \subset \N$; for example, a result of Bergelson and Leibman \cite{BL96} says that $S= \{1^{100}, 2^{100}, 3^{100},  \ldots \}$ (or indeed $\{p(n): n\in \N\}$ for an integer polynomial $p$ with $p(0)=0$) is sufficient. Of course, the set $S$ under consideration by Bergelson and Leibman is of a special structure. 

We are interested in the situation where $S$ is chosen at random. In the finitary model, it is common practice to construct the random set $S$ by selecting each $d$ to lie in $S$ independently with equal probability. In $\N$, the probability that $d$ lies in $S$ must be a function of $d$.

For $2$APs, it is known \cite{Bou87} that if $\Pr(d\in S) = \omega(\log N / N)$ then Szemer\'{e}di's theorem with common difference in $S$ holds asymptotically almost surely (a.a.s), and conversely that if $\Pr(d\in S) \leqslant C \log N / N$ then Szemer\'{e}di's theorem with common difference in $S$ a.a.s fails. For $k$APs, the current best result is due to Bri{\"e}t and Gopi in \cite{BG18}, which states that $\Pr(d \in S) = \omega\left( \frac{\log N}{N^{1/\left\lceil k/2 \right \rceil}}\right)$ is sufficient. In the case that $k=3$, this bound does not improve upon earlier work of Christ and Frantzikinakis, Lesigne and Wierdl in \cite{Chr11} and \cite{FLW12} respectively. 

For $k \geq 3$, there is a substantial gap between these results and conjectures found in work of Frantzikinakis and others. We include a reformulation here for convenience.
\begin{conj}[{\cite[Problem 31]{Fra16}, \cite[Conjecture 2.5]{FLW16}}]\label{c:franz}
Let $S \subset \N$ be chosen at random with $\Pr(d\in S) = \omega(1/d)$. Then it is almost surely the case that all subsets of $\N$ with positive upper density contain a $k$-term arithmetic progression with common difference in $S$.
\end{conj}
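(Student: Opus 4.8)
The approach is to execute the reduction proposed by Frantzikinakis and his coauthors: derive the conjecture from the (assumed) statement that dual functions are approximated, \emph{uniformly in the sup norm}, by nilsequences whose complexity depends only on the accuracy and on $k$. Fix $k\geqslant 3$ and $\delta>0$ (the case $k=2$ is classical) and write $p_d=\Pr(d\in S)$. Two preliminary reductions. Since a random set with inclusion probabilities $p_d$ stochastically dominates one with any smaller probabilities, and since $p_d=\omega(1/d)$, we may replace $p_d$ by $q_d=\tilde g(d)/d$ for a suitable non-decreasing $\tilde g\to\infty$ with $\tilde g(2d)\leqslant\tilde g(d)+1$; thus we may assume $p_d$ is slowly varying, $p_d\asymp p_{d'}$ whenever $d\asymp d'$, while still $P_N:=\sum_{d\leqslant N}p_d=\omega(\log N)$ (indeed $P_N\geqslant(\inf_{d\geqslant\sqrt N}dp_d)\sum_{\sqrt N\leqslant d\leqslant N}\tfrac1d$, and the infimum tends to infinity). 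Second, by Borel--Cantelli it suffices to prove that for each $\delta$ the event ``every $\delta$-dense $A\subseteq[N]$ contains a nontrivial $k$AP with common difference in $S\cap[1,N]$'' holds for all large $N$ with failure probability summable in $N$; one then recovers the conjecture by applying this at one of the infinitely many scales $N$ at which $B$ is $\delta$-dense and which exceeds the a.s.\ finite implied threshold. (The passage between $\N$ and $[N]$, and the $\limsup$ in the definition of positive upper density, need care but are routine.)

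Fix a $\delta$-dense $A\subseteq[N]$. Embed $[N]$ in $\Z_{N'}$ with $N'$ prime near $3N$ and set $\Lambda_d(A)=\tfrac{N'}{N}\E_{x\in\Z_{N'}}\prod_{i=0}^{k-1}\b1_A(x+id)$, so that for $d\leqslant N/(k-1)$ this counts genuine progressions and the number of nontrivial $k$APs in $A$ with difference in $S$ is $N\sum_{d\leqslant N/(k-1)}\b1_S(d)\Lambda_d(A)$, which must be shown positive. Apply the Green--Tao arithmetic regularity lemma: $\b1_A=f_{\mathrm{nil}}+f_{\mathrm{sml}}+f_{\mathrm{unif}}$ on $\Z_{N'}$ with $f_{\mathrm{nil}}$ a nilsequence of complexity $\leqslant M_0(\delta,k)$, $\|f_{\mathrm{sml}}\|_2\leqslant\eps$, and $\|f_{\mathrm{unif}}\|_{U^{k-1}}$ as small as we wish. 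By the generalized von Neumann theorem and Cauchy--Schwarz, every term in the expansion of $\prod_i\b1_A(x+id)$ containing $f_{\mathrm{sml}}$ or $f_{\mathrm{unif}}$ is $O(\eps)$ \emph{uniformly in $d$}, so $\Lambda_d(A)=\tfrac{N'}{N}\E_x\prod_i f_{\mathrm{nil}}(x+id)+O_{\delta,k}(\eps)$. The map $d\mapsto\tfrac{N'}{N}\E_x\prod_i f_{\mathrm{nil}}(x+id)$ is, up to $U^{k-1}$-uniform errors, a dual function built from $f_{\mathrm{nil}}$; this is exactly where the conjecture enters, producing a genuine nilsequence $\phi_A$ in $d$, of complexity $\leqslant M_1(\delta,k)$ and bounded by $O(1)$, with $\Lambda_d(A)=\phi_A(d)+O_{\delta,k}(\eps)$ uniformly for $d\leqslant N/(k-1)$. (For $k\leqslant 3$ this dual function is already a bounded-complexity nilsequence --- for $k=3$, a bounded trigonometric polynomial in $d$ --- so the argument is unconditional there; the obstruction for $k\geqslant 4$ is that the relevant equidistribution subgroup of the product nilmanifold varies with $d$.)

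Now Varnavides' theorem gives $\E_{d\leqslant N/(k-1)}\Lambda_d(A)\gg_{\delta,k}1$, hence $\E_d\phi_A(d)\gg_{\delta,k}1$; since $\phi_A$ is a nilsequence of bounded complexity, the superlevel set $G_A=\{d\leqslant N/(k-1):\phi_A(d)\geqslant c_{\delta,k}\}$ is syndetic with gaps $O_{\delta,k}(1)$, and as $p_d$ is slowly varying, $\sum_{d\in G_A}p_d\gg_{\delta,k}P_N$. Hence $\sum_{d}p_d\Lambda_d(A)\geqslant c_{\delta,k}\sum_{d\in G_A}p_d-O_{\delta,k}(\eps P_N)\gg_{\delta,k}P_N$ once $\eps=\eps(\delta,k)$ is small. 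Writing $\b1_S(d)=p_d+(\b1_S(d)-p_d)$, it remains to control the fluctuation; by the previous paragraph it suffices to bound $\sum_d(\b1_S(d)-p_d)\phi(d)$ simultaneously over all $O(1)$-bounded nilsequences $\phi$ of complexity $\leqslant M_1(\delta,k)$. For a fixed such $\phi$ this is a sum of independent, mean-zero, $O(1)$-bounded variables of total variance $\ll\sum_d p_d=P_N$, so Bernstein's inequality gives
\[
\Pr\Big(\big|\textstyle\sum_{d\leqslant N/(k-1)}(\b1_S(d)-p_d)\phi(d)\big|\geqslant\tfrac14 c_{\delta,k}P_N\Big)\leqslant 2\exp(-c'_{\delta,k}P_N).
\]
Such nilsequences on $[1,N/(k-1)]$ admit an $\eps$-net of size $N^{O_{\delta,k}(1)}$ (the nilmanifold and Lipschitz function give $O_{\delta,k}(1)$ choices, and each of the $O_{\delta,k}(1)$ generators of the bounded-degree polynomial orbit need be specified only to precision $\eps N^{-O_{\delta,k}(1)}$), so a union bound yields total failure probability $\leqslant N^{O_{\delta,k}(1)}\exp(-c'_{\delta,k}P_N)$. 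This is $O(N^{-2})$ \emph{precisely because} $P_N=\omega(\log N)$ --- the exact point at which $p_d=\omega(1/d)$ is used --- hence summable; Borel--Cantelli applies, and a.a.s.\ for all large $N$ every $\delta$-dense $A\subseteq[N]$ has $\sum_d\b1_S(d)\Lambda_d(A)\gg_{\delta,k}P_N>0$.

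The main obstacle is the invoked conjecture, i.e.\ the second paragraph for $k\geqslant 4$: one needs the dual function of a bounded-complexity nilsequence (equivalently of $\b1_A$ modulo $U^{k-1}$-uniform errors) to be approximable in the \emph{sup} norm by a nilsequence whose complexity depends only on $\eps$ and $k$. The $L^2$ approximation coming from the inverse theorem does not suffice, because the weights $p_d$ can be as skewed as $\asymp1/d$, so an $L^2$-small error may be amplified by the weighting, whereas a sup-norm error $\eps$ costs only $O(\eps P_N)$. Secondary difficulties: making the first-paragraph reduction precise (handling the $\limsup$ and ensuring per-scale failure probabilities are genuinely summable), and verifying the net bound $N^{O_{\delta,k}(1)}$ with an exponent independent of $N$, so that $\exp(-c'_{\delta,k}P_N)$ truly overwhelms it.
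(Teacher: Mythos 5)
Your proposal does not (and could not, as written) prove the statement outright: like the paper, it is a conditional reduction, and in outline it follows exactly the paper's route for $k=3$ --- assume a sup-norm approximation of dual functions by bounded-complexity nilsequences (Conjecture \ref{c:conj}), use Varnavides for the main term, control fluctuations by Bernstein's inequality together with a union bound over an $N^{O(1)}$-sized net of nilsequences (Proposition \ref{p:nilEntropy}), handle the non-constant probabilities $\mu(d)=\omega(1/d)$ via $\sum_{d\leqslant N}\mu(d)=\omega(\log N)$ (Theorem \ref{t:logIntBound}), and pass from $[N]$ to $\N$ by Borel--Cantelli (Corollary \ref{cor:infinitary}). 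So the skeleton is the paper's; the issue is with the extra middle step you insert.

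The genuine gap is your claim that, after the arithmetic regularity decomposition $\b1_A=f_{\mathrm{nil}}+f_{\mathrm{sml}}+f_{\mathrm{unif}}$, every term involving $f_{\mathrm{unif}}$ is $O(\eps)$ \emph{uniformly in $d$} by the generalized von Neumann theorem. That theorem bounds the count averaged over $d$ (it needs the average over the whole progression-parameter to perform Cauchy--Schwarz); it gives nothing for a fixed $d$. Concretely, for $k=3$ take $A=\{x\in[N]:\Vert\alpha x^2\Vert\leqslant 1/10\}$ ($\Vert\cdot\Vert$ the distance to the nearest integer, $\alpha$ generic): $\b1_A-|A|/N$ is Fourier-uniform, so $f_{\mathrm{nil}}$ is essentially constant, yet because $x^2-2(x+d)^2+(x+2d)^2=2d^2$ the dual function $F_A(d)$ oscillates with $\Vert 2\alpha d^2\Vert$ and is not within $O(\eps)$ of a constant in sup norm. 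This false step is precisely what powers your parenthetical claim that the $k=3$ case is unconditional and that $F_A$ is approximable by a bounded trigonometric polynomial in $d$: if that were true, Conjecture \ref{c:conj} would be a theorem, whereas the paper stresses that only $L^2$ control of the error follows (Koopman--von Neumann) and that the required $L^\infty$ control --- necessarily by genuinely $2$-step nilsequences for $k=3$ --- is exactly what is open. If you instead apply your hypothesis directly to $F_A$ (the dual of $\b1_A$), the regularity step is unnecessary and you recover the paper's argument. A secondary flaw: your deduction that a bounded-complexity nilsequence $\phi_A$ with $\E_d\phi_A\gg 1$ has superlevel set syndetic with gaps $O_{\delta,k}(1)$ is false as stated, since complexity does not constrain the frequencies of the polynomial orbit (e.g.\ $\phi(d)=\psi(d/2N \bmod 1)$ can concentrate its superlevel set on the top half of $[N]$, which carries only a $1/\log N$ fraction of $P_N$ when $p_d\asymp 1/d$); the bound $\sum_d p_d\Lambda_d(A)\gg_{\delta}P_N$ should rather be obtained by running Varnavides at every scale $M\leqslant N$ (partitioning $[N]$ into length-$M$ intervals), which is closer to how the paper treats the skewed weights in Theorem \ref{t:logIntBound}.
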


Conjecture \ref{c:franz} is in fact best possible in the sense that if  $S$ is constructed with $\Pr(d \in S) = 1/d$, then Szemer\'{e}di's theorem with common difference in $S$ fails (see discussion in \cite[Section 2]{FLW16}).

\subsection{Our results}
We study Szemer\'{e}di's theorem with differences restricted to random subsets of $[N], \N$ and $\F_p^n$. Although many of our methods generalize straightforwardly, we will focus on $k=3$ as much still remains to be understood about this special case.

In Section \ref{s:integers} (over $[N]$), by analogy to the case $k=2$, Conjecture \ref{c:conj} stipulates that so-called dual functions 
\[ F_A(d) := \E_x 1_A(x) 1_A(x+d) 1_A(x+2d),\]
for dense sets $A$ are well approximated by 2-step nilsequences. (This conjecture is similar to \cite[Special Case of Problem 1]{Fra16} - see \cite[Problem 1]{Fra16} for a discussion and related results). We show that under Conjecture \ref{c:conj}, Szemer\'{e}di's theorem in $[N]$ with common difference in $S$ a.a.s holds under two different probability models. Firstly, in Theorem \ref{t:intBound}, we choose $d$ to lie in $S$ with probability $\omega(\log N /N)$. Next, in Theorem \ref{t:logIntBound}, we choose $d$ to lie in $S$ with probability $\omega(1/d)$. The latter result is used to establish (almost surely) Szemer\'{e}di's theorem in $\N$ with common difference in $S$ where $\Pr(d \in S) = \omega(1/d)$ (Corollary \ref{cor:infinitary}). Thus, our Conjecture \ref{c:conj} implies Conjecture \ref{c:franz} above (focusing on the case $k=3$).

In Section \ref{s:finiteFields} we show (Corollary \ref{c:main}) that the analogous result to Theorem \ref{t:intBound} over finite fields is false (by some margin). Indeed, if $S$ is formed by selecting elements with probability 
\[\Pr(d \in S) = \frac{ c n^2}{p^n}, \]
 with $c = \frac{1}{2} - o(1) $, then Szemer\'{e}di's theorem for $k=3$ with common difference in $S$ almost surely fails.  
 We contrast this behavior with the case $k=2$, where the threshold for $\Pr(d\in S)$ for Szemer\'{e}di's theorem in $[N]$ to hold is analogous to the threshold over finite fields.

\section{Over the integers}\label{s:integers}

The result that $\Pr(d \in S) = \omega(\log N / N)$ is sufficient for Szemer\'{e}di's theorem on 2APs to a.a.s hold in $[N]$ (see e.g. \cite{Bou87}) can be proven by considering the 2-dual functions 
\[ F_A^{(2)} (d) := \E_x 1_A(x) 1_A(x+d),\]
which count the average number of $2$APs in dense sets $A$ with common difference $d$. Then one is interested in the quantity 
\begin{align*}
\langle F_A^{(2)} , 1_S \rangle := &\E_d \left( \E_{x}  1_A(x) 1_A(x+d)  \right)1_S(d)\\
= &\E_{x,d}  1_A(x) 1_A(x+d) 1_S(d),
\end{align*}
which counts the average number of 2APs in $A$ with common difference in $S$. The functions $F_A^{(2)}$ are just convolutions and so, by Fourier analysis, are well-approximated by convex combinations of additive characters selected uniformly in $A$. Then, roughly speaking, one can control the quantity $\langle F_A^{(2)}, 1_S \rangle$ uniformly in $A$ by controlling $\langle \phi, 1_S \rangle$ across a set of additive characters $\{\phi\}$. 

In pursuit of a similar argument for the case $k=3$, we define
\[ F_A (d) := \E_x 1_A(x) 1_A(x+d) 1_A(x+2d),\]
and provide Conjecture \ref{c:conj}, that these dual functions are well-approximated by convex combinations of 2-step nilsequences.

See \cite[Problem 1]{Fra16} for similar conjectures and related discussion. See also Appendix A for details on what we mean by a nilsequence and its complexity. 

\begin{conj}\label{c:conj}
Let $\eps > 0$. There exists a set of 2-step nilsequences $\{\phi_j\}$ of complexity $O_\eps(1)$, and, for every $A$, a family of coefficients $c_{A,j}$ with $\sum_j| c_{A,j}|  \leqslant 1$ and a family of error functions $e_A$ with $\left| e_A(d) \right| \leqslant \eps$ for $d=1,\ldots,N$, such that 
\[ F_A(d) = \sum_j c_{A,j} \phi_j(d) + e_A(d). \]
\end{conj}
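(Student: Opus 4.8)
Since Conjecture~\ref{c:conj} is genuinely a conjecture, what follows is the natural route to it and the precise point at which it stalls (which is, in essence, why it is open and ``similar to'' \cite[Problem 1]{Fra16}). The first move is a routine reduction. Writing $1_A = \alpha + f$ with $\alpha = |A|/N$, $\|f\|_\infty \le 1$ and $\E_x f(x) = 0$, and expanding, the terms linear in $f$ vanish, so
\[ F_A(d) = \alpha^3 + \alpha\big(\E_x f(x)f(x+d) + \E_x f(x)f(x+2d) + \E_x f(x+d)f(x+2d)\big) + G_A(d), \]
where $G_A(d) := \E_x f(x)f(x+d)f(x+2d)$. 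The constant $\alpha^3$ is a $0$-step nilsequence, and each pair correlation equals $\sum_\xi |\widehat f(\xi)|^2 e(c\xi d/N)$ for $c\in\{1,2\}$ (the third coincides with the first after a shift), i.e.\ an honest convex combination of linear phases of total $\ell^1$-mass at most $\|f\|_{L^2}^2\le 1$. So, modulo careful bookkeeping of constants (see the last paragraph), everything reduces to approximating the single dual function $G_A$ by $2$-step nilsequences in $L^\infty$.

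The reason $2$-step — and not $1$-step — nilsequences appear is a \emph{true complexity} computation. The structural input is that for every $1$-bounded $g\colon[N]\to\C$,
\[ \big|\langle G_A, g\rangle\big| = \big|\E_{x,d} f(x)f(x+d)f(x+2d)\,\overline{g(d)}\big| = o_{\|g\|_{U^3[N]}\to 0}(1) \]
uniformly in $A$, and that this \emph{fails} with the $U^2[N]$-norm in place of $U^3[N]$. Both facts follow from the Gowers--Wolf theorem on the true complexity of a system of linear forms, applied to $L_1=x$, $L_2=x+d$, $L_3=x+2d$, $L_4=d$, dualising on $L_4$: one checks the identity $d^2 = \tfrac12 x^2 - (x+d)^2 + \tfrac12(x+2d)^2$ (so $U^2$ does not control the $L_4$-slot, because of a quadratic obstruction), while $d^3\notin\spa_{\mathbb Q}\{x^3,(x+d)^3,(x+2d)^3\}$ (so $U^3$ does). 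Concretely: if $f$ carries a quadratic component $e(\theta x^2/N)$, then $G_A$ acquires a genuine quadratic phase $e(\theta d^2/N)$, which no bounded-$\ell^1$ combination of linear phases can approximate.

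Granting the displayed bound, one would run the standard energy/structure-increment scheme: maintain a bounded-$\ell^1$ combination $\psi_t$ of $2$-step nilsequences of complexity $O_\eps(1)$, and, if the residual $R_t = G_A - \psi_t$ correlates with some such nilsequence, absorb it. The energy $\|\psi_t\|_{L^2}^2$ is bounded (by $\|G_A\|_{L^2}^2\le1$) and strictly increases per step, so the process terminates; at termination $R_t$ correlates negligibly with every $2$-step nilsequence of complexity $O_\eps(1)$, whence $\|R_t\|_{U^3[N]}$ is negligible by the contrapositive of the Green--Tao--Ziegler inverse theorem (applied to $R_t/\|R_t\|_\infty$), and therefore $\|R_t\|_{L^2}^2 = \langle R_t, G_A\rangle = o(\|R_t\|_{U^3[N]})$ is small by the displayed estimate itself. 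This would prove an $L^2$ version of the conjecture.

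The main obstacle is precisely that this produces an $L^2$-approximation, whereas Conjecture~\ref{c:conj} demands an $L^\infty$-one: error $\le\eps$ at \emph{every} $d\in\{1,\dots,N\}$. This is essential downstream, not cosmetic: the quantity one ultimately controls is $\langle F_A, 1_S\rangle = \frac1N\sum_{d\in S}F_A(d)$ for a random $S$ with $|S| = N\,\Pr(d\in S)$, which in Theorem~\ref{t:intBound} may be as small as $O(\log N)$, and for so sparse an $S$ an $L^2$- or $L^1$-bound on the error is worthless — one must know $F_A(d)$ pointwise on the few $d$ landing in $S$. Passing from $L^2$ to $L^\infty$ would seem to require either a pointwise-in-$d$ arithmetic regularity lemma for the configuration $\{x,x+d,x+2d\}$ — but the averaging over $d$ is exactly what makes the Gowers--Wolf estimate work, and it is exactly what one cannot afford here — or a way of exploiting extra regularity of $G_A$ itself, and I do not see how to do either. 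A secondary, more technical difficulty is to secure the complexity bound $O_\eps(1)$ together with the normalisation $\sum_j|c_{A,j}|\le 1$ (rather than $O(1)$) in a quantitatively clean way, which leans on recent quantitative forms of the $U^3$-inverse theorem and of arithmetic regularity.
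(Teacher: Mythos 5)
This statement is a conjecture in the paper; no proof of it exists there, so there is nothing to compare your argument against line by line. What you have written is not a proof and does not claim to be, and your diagnosis of where the natural attack stalls coincides exactly with the paper's own remark following the conjecture: an $L^2$-version of the decomposition follows from Koopman--von Neumann-type energy increment arguments, and the genuine difficulty is the $L^\infty$ control of $e_A$, which is indeed what the sparse-$S$ application in Theorem \ref{t:intBound} forces. Your supplementary observations are sound: the reduction to the trilinear piece $G_A$, the identity $d^2=\tfrac12 x^2-(x+d)^2+\tfrac12(x+2d)^2$ showing the quadratic obstruction, and the fact that $d^3\notin\spa\{x^3,(x+d)^3,(x+2d)^3\}$ (so true complexity $2$, whence $2$-step nilsequences are the right class) all check out, and the normalisation issue for $\sum_j|c_{A,j}|\leqslant 1$ is fairly flagged as secondary. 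In short: you have correctly identified the statement as open and reproduced, in more detail, the paper's own assessment of why.
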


The difficulty in proving Conjecture \ref{c:conj} lies in controlling the error $e_A$ in $L^\infty$-norm. For example, if we only demanded control of $e_A$ in $L^2$-norm, then the above decomposition would follow as a consequence of the Koopman--von Neumann theorem.

The following proposition (known to experts) will yield that it suffices to consider only polynomially-many nilsequences. We defer to Appendix A for a proof. 

\begin{prop}\label{p:nilEntropy}
Let $s,\eps, C>0$.  There is a set $X_{s,\eps,C}$ of $N^{O_{s,\eps, C}(1)}$ $s$-step nilsequences of complexity $O_{s,\eps, C}(1)$ such that, for any $s$-step nilsequence $\phi$ of complexity at most $C$, there is some $\phi^\prime \in X_{s,\eps, C}$ with 
$\left|  \phi(d) - \phi^\prime(d) \right| \leqslant \eps$
for $d=1,\ldots,N$. 
\end{prop}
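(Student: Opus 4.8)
The plan is a covering (net) argument that exploits three sources of finiteness or compactness: there are only boundedly many nilmanifolds of a given complexity, the space of uniformly bounded Lipschitz functions on a fixed nilmanifold is totally bounded, and the orbit of a polynomial sequence is rigidly determined by finitely many ``Taylor coefficients''. Recall from Appendix A that an $s$-step nilsequence $\phi$ of complexity at most $C$ has the form $\phi(d) = F(g(d)\Gamma)$, where $G/\Gamma$ is an $s$-step nilmanifold equipped with a filtration and a Mal'cev basis whose dimension and rationality are $O_C(1)$, where $F\colon G/\Gamma \to \C$ satisfies $\|F\|_\infty, \|F\|_{\mathrm{Lip}} \leqslant C$, and where $g$ is a polynomial sequence adapted to the filtration, hence of degree $\leqslant s$. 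Since there are only $O_C(1)$ nilmanifolds of complexity $\leqslant C$, it suffices to handle each one separately and take the union, so I would fix $G/\Gamma$ once and for all and produce $N^{O_{s,\eps,C}(1)}$ pairs $(F',g')$ whose nilsequences $\eps$-approximate every such $\phi$ on $\{1,\dots,N\}$.

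First I would build the net of functions. The family of $F\colon G/\Gamma \to \C$ with $\|F\|_\infty,\|F\|_{\mathrm{Lip}}\leqslant C$ is uniformly bounded and equicontinuous on the compact metric space $G/\Gamma$, hence totally bounded in the uniform norm by Arzel\`a--Ascoli; I fix an $(\eps/2)$-net $\{F_1,\dots,F_M\}$ of it, with $M = O_{s,\eps,C}(1)$, taking each $F_j$ to be $O_C(1)$-Lipschitz. Next I would discretise the orbit. Using the standard Taylor expansion, write $g(d) = g_0\, g_1^{\binom d1}\cdots g_s^{\binom ds}$ with $g_i \in G_i$, and, after right-multiplying by a $\Gamma$-valued polynomial sequence (which leaves $g(d)\Gamma$ unchanged), assume the $g_i$ have Mal'cev coordinates in a box of size $O_C(1)$. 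Discretising these coordinates at a scale $\eta$ to be chosen produces at most $(O_C(1)/\eta)^{O_{s,C}(1)}$ reduced coefficient tuples $(g_0',\dots,g_s')$, and hence that many candidate polynomial sequences $g'$.

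The step with actual content is the quantitative continuity estimate: if $g$ and $g'$ have reduced coefficient tuples $\eta$-close in Mal'cev coordinates, then $\mathrm{dist}_{G/\Gamma}(g(d)\Gamma, g'(d)\Gamma) \leqslant N^{O_{s,C}(1)}\eta$ for all $d \in \{1,\dots,N\}$. This holds because the coefficients lie in a bounded box while the exponents satisfy $\binom di \leqslant N^s$, so each power $g_i^{\binom di}$ and the product $g_0 g_1^{\binom d1}\cdots g_s^{\binom ds}$ have coordinates of size at most $N^{O_{s,C}(1)}$, the power and multiplication maps being polynomial in coordinates with $O_C(1)$-bounded coefficients; on a region of that size $\mathrm{dist}_G$ is bi-Lipschitz to Euclidean distance with a loss of at most $N^{O_{s,C}(1)}$, and the composed coordinate map is Lipschitz there with the same order of constant, so perturbing the coefficients by $\eta$ perturbs $g(d)$ in $G$ by at most $N^{O_{s,C}(1)}\eta$; passing to $G/\Gamma$ only decreases distances. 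Choosing $\eta := \eps\, N^{-O_{s,C}(1)}/(2C)$ with the exponent matched to the above, and letting $F_j$ be the net point nearest $F$, gives $|\phi(d) - F_j(g'(d)\Gamma)| \leqslant C\,\mathrm{dist}_{G/\Gamma}(g(d)\Gamma,g'(d)\Gamma) + \eps/2 \leqslant \eps$. The set $X_{s,\eps,C}$ of nilsequences $d\mapsto F_j(g'(d)\Gamma)$ thus has size $O_C(1)\cdot M\cdot(O_C(1)/\eta)^{O_{s,C}(1)} = N^{O_{s,\eps,C}(1)}$, and each member has complexity $O_{s,\eps,C}(1)$ because $G/\Gamma$, the Lipschitz norm of $F_j$, and the coefficients of $g'$ are all appropriately bounded.

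I expect the main obstacle to be keeping the loss in the continuity estimate polynomial rather than exponential in $N$: this is precisely why the reduction of the polynomial sequence to bounded coefficients is indispensable (so that one never raises a large group element to a large power), and it forces some care with the Mal'cev-coordinate bookkeeping for the power and multiplication maps. A secondary, more bureaucratic point is to line up the definition of complexity used in Appendix A so that ``complexity $\leqslant C$'' really does supply the bounds on dimension, degree, rationality, and Lipschitz norm that the argument consumes.
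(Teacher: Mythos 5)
Your proposal is correct and follows essentially the same route as the paper: fix one of the $O_{s,C}(1)$ nilmanifolds of bounded complexity, take an Arzel\`a--Ascoli net of the bounded Lipschitz functions, take an $N^{O_{s,\eps,C}(1)}$-sized net of polynomial sequences, and conclude with the triangle inequality and the Lipschitz bound. The only difference is that you re-derive the covering of polynomial orbits by hand (Taylor coefficients, reduction modulo $\Gamma$, discretization with a loss polynomial in $N$), whereas the paper simply imports this as Lemma \ref{l:polyPolyNilman} from \cite[Lemma B.7]{BGSZ16}, whose proof runs along the same lines as your sketch.
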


Combining Conjecture \ref{c:conj} with Proposition \ref{p:nilEntropy} yields the following corollary.

\begin{cor}\label{cor:polyConj}
Conjecture \ref{c:conj} holds if and only if it holds under the restriction that the cardinality of the set of 2-step nilsequences is $N^{O_\eps(1)}$. 
\end{cor}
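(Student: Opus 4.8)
The plan is to derive the corollary by combining the hypothesised decomposition of Conjecture \ref{c:conj} with the net of nilsequences supplied by Proposition \ref{p:nilEntropy}. One implication is immediate: if the decomposition in Conjecture \ref{c:conj} can always be achieved using a set of 2-step nilsequences of cardinality $N^{O_\eps(1)}$, then in particular it can be achieved, so the restricted statement formally implies the unrestricted one. All the content is in the reverse direction, which is what I would spend the argument on.

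For that direction, fix $\eps>0$ and apply the unrestricted form of Conjecture \ref{c:conj} with parameter $\eps/2$ in place of $\eps$. This produces a family of 2-step nilsequences $\{\phi_j\}$ of complexity at most some $C=O_\eps(1)$, together with, for each $A$, coefficients $c_{A,j}$ with $\sum_j|c_{A,j}|\leqslant 1$ and an error $e_A$ with $|e_A(d)|\leqslant \eps/2$ for $d=1,\ldots,N$, such that $F_A(d)=\sum_j c_{A,j}\phi_j(d)+e_A(d)$. Next I would apply Proposition \ref{p:nilEntropy} with $s=2$, with $\eps/2$ in place of $\eps$, and with this value of $C$: this yields a set $X:=X_{2,\eps/2,C}$ of at most $N^{O_{2,\eps/2,C}(1)}=N^{O_\eps(1)}$ 2-step nilsequences, each of complexity $O_\eps(1)$, with the property that for every $j$ there is some $\phi'_j\in X$ with $|\phi_j(d)-\phi'_j(d)|\leqslant \eps/2$ for $d=1,\ldots,N$. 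Fix such a choice $j\mapsto \phi'_j$; the key point is that this map need not be injective, and may collapse a large (even infinite) index set onto the polynomially-sized set $X$.

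It then remains to reorganise the decomposition around $X$. For each $\psi\in X$ set $c'_{A,\psi}:=\sum_{j\,:\,\phi'_j=\psi}c_{A,j}$; this sum is absolutely convergent and $\sum_{\psi\in X}|c'_{A,\psi}|\leqslant \sum_j|c_{A,j}|\leqslant 1$. Writing $\tilde e_A(d):=e_A(d)+\sum_j c_{A,j}(\phi_j(d)-\phi'_j(d))$, one has $F_A(d)=\sum_{\psi\in X}c'_{A,\psi}\psi(d)+\tilde e_A(d)$, and by the triangle inequality $|\tilde e_A(d)|\leqslant \eps/2+\big(\sum_j|c_{A,j}|\big)(\eps/2)\leqslant \eps$ for $d=1,\ldots,N$. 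This is exactly the decomposition in Conjecture \ref{c:conj}, now using a set of 2-step nilsequences of complexity $O_\eps(1)$ and cardinality $N^{O_\eps(1)}$, which establishes the restricted statement.

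No step here presents a genuine obstacle: the argument is a pigeonholing of the original (possibly unbounded) family of nilsequences into the polynomial-size net of Proposition \ref{p:nilEntropy}, followed by a triangle inequality. The only points requiring minor care are the allocation of the error budget — running Conjecture \ref{c:conj} and Proposition \ref{p:nilEntropy} each with $\eps/2$ so that the combined error is at most $\eps$ — and the observation that the new coefficient vectors inherit the $\ell^1$ bound because grouping terms can only decrease the $\ell^1$ norm.
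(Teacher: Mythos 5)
Your argument is correct and is exactly the route the paper intends: it deduces the corollary by combining Conjecture \ref{c:conj} with the polynomial-size net of Proposition \ref{p:nilEntropy}, splitting the error budget and regrouping coefficients (which preserves the $\ell^1$ bound), precisely the ``combining'' step the paper leaves implicit. No gaps.
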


\begin{thm}\label{t:intBound}
Let $S \subset [N]$ be formed by letting each $d \in [N]$ lie in $S$ independently with probability $\sigma := \omega(\log N/N)$. If Conjecture \ref{c:conj} holds, then Szemer\'{e}di's theorem for $k=3$ with common differences in $S$ holds with probability $1-o(N^{-100})$.
\end{thm}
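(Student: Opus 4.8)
The plan is to reduce the existence of a non-trivial $3$AP with common difference in $S$ inside a given dense set $A\subseteq[N]$ to the positivity of $\langle F_A,1_S\rangle=\E_{x,d}1_A(x)1_A(x+d)1_A(x+2d)1_S(d)$: since the average over $d$ runs over $d\in[N]=\{1,\dots,N\}$, any pair $(x,d)$ witnessing positivity has $d\geq 1$ and hence yields a non-trivial progression. Fix $\delta>0$. By Varnavides' theorem there is $c(\delta)>0$ with $\E_d F_A(d)\geq c(\delta)$ for every $\delta$-dense $A$ once $N$ is large, so it suffices to show that, with probability $1-o(N^{-100})$, one has $\langle F_A,1_S\rangle>0$ simultaneously for all such $A$ (applying this for every $\delta$ then gives the theorem). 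Set $\eps:=c(\delta)/16$. The first real step is to invoke Corollary~\ref{cor:polyConj}: under Conjecture~\ref{c:conj} it produces a family $\{\phi_j\}_{j\in J}$ of $2$-step nilsequences with $|J|=N^{O_\eps(1)}$, of complexity $O_\eps(1)$ (hence uniformly bounded, which we normalise to $|\phi_j|\leq 1$) and \emph{not depending on $A$}, together with, for each $A$, coefficients $(c_{A,j})$ with $\sum_j|c_{A,j}|\leq 1$ and an error with $|e_A(d)|\leq\eps$ for $d\in[N]$, such that $F_A(d)=\sum_j c_{A,j}\phi_j(d)+e_A(d)$. It is crucial that the list of nilsequences (and its polynomial size), unlike the coefficients, is common to all $A$; this is exactly what will make the final estimate uniform in $A$, and is why we need the polynomial version from Corollary~\ref{cor:polyConj} rather than Conjecture~\ref{c:conj} as stated.

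Next I would introduce the $A$-independent ``good event''
\[ \mathcal{E}:=\bigcap_{j}\Big\{\,\bigl|\langle\phi_j,1_S\rangle-\sigma\,\E_d\phi_j(d)\bigr|\leq\eps\sigma\,\Big\}\;\cap\;\bigl\{|S|\leq 2\sigma N\bigr\}, \]
and check that on $\mathcal{E}$ the conclusion holds for every $\delta$-dense $A$ at once. Expanding $\langle F_A,1_S\rangle=\sum_j c_{A,j}\langle\phi_j,1_S\rangle+\langle e_A,1_S\rangle$, using $\sum_j|c_{A,j}|\leq 1$ to transfer the per-nilsequence error, rewriting $\sum_j c_{A,j}\,\E_d\phi_j(d)=\E_d F_A(d)-\E_d e_A(d)$, and bounding $|\langle e_A,1_S\rangle|\leq\eps|S|/N\leq 2\eps\sigma$ and $|\E_d e_A(d)|\leq\eps$, a short computation gives $\langle F_A,1_S\rangle\geq\sigma\bigl(c(\delta)-4\eps\bigr)\geq\tfrac{3}{4}\sigma c(\delta)>0$ on $\mathcal{E}$. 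Since neither $\{\phi_j\}$ nor $\mathcal{E}$ depends on $A$, this holds for all $\delta$-dense $A$ simultaneously.

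It then remains to show $\Pr[\mathcal{E}^c]=o(N^{-100})$, which is a routine large-deviation estimate. The event $\{|S|>2\sigma N\}$ has probability $\exp(-\Omega(\sigma N))$ by Chernoff. For each fixed $j$, writing $\langle\phi_j,1_S\rangle=\frac1N\sum_{d\in[N]}\phi_j(d)1_S(d)$ as a sum of independent contributions of size $O(1/N)$ with total variance $O(\sigma/N)$ (treating real and imaginary parts separately), Bernstein's inequality bounds the probability that $|\langle\phi_j,1_S\rangle-\sigma\E_d\phi_j(d)|>\eps\sigma$ by $\exp(-\Omega_\eps(\sigma N))$; a union bound over the $N^{O_\eps(1)}$ indices then gives $\Pr[\mathcal{E}^c]\leq N^{O_\eps(1)}\exp(-\Omega_\eps(\sigma N))$. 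Here the hypothesis $\sigma=\omega(\log N/N)$ is precisely what is needed: it forces $\sigma N=\omega(\log N)$, so $\exp(-\Omega_\eps(\sigma N))=N^{-\omega(1)}$ overwhelms the polynomial count and $\Pr[\mathcal{E}^c]=N^{-\omega(1)}=o(N^{-100})$. Given Conjecture~\ref{c:conj}, there is no serious obstacle in this argument; its only real content is (i) the observation that the nilsequence family can be chosen independently of $A$, which upgrades a per-$A$ decomposition into a statement uniform over all dense $A$, and (ii) the calibration of the union bound, which is what pins the threshold at $\omega(\log N/N)$. (For substantially smaller $\sigma$, such as the $\omega(1/d)$ threshold of Theorem~\ref{t:logIntBound}, the naive union bound fails and one instead needs a dyadic refinement over the range of $d$.) Minor technical care is needed only for complex-valued nilsequences and for the precise normalisation attached to complexity $O_\eps(1)$.
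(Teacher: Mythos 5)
Your proposal is correct and follows essentially the same route as the paper: decompose $F_A$ via Conjecture \ref{c:conj} together with the polynomial-size family from Corollary \ref{cor:polyConj}, control each of the $N^{O_\eps(1)}$ nilsequence correlations by Bernstein plus a union bound (which is exactly where $\sigma=\omega(\log N/N)$ enters), handle the $\eps$-small error term via concentration of $|S|$, and conclude positivity uniformly over dense $A$ from Varnavides' theorem. The only cosmetic differences are that the paper centres everything with $Y(d)=1_S(d)-\sigma$ and bounds $\langle e_A,Y\rangle$ by a Bernstein estimate on $\sum_d|Y(d)|$ rather than your Chernoff bound on $|S|$, and it sends $\eps\to 0$ at the end instead of fixing $\eps$ in terms of the Varnavides constant as you do.
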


\begin{proof}
For $d=1,\ldots, N$, let $Y(d)$ denote the mean-zero random variable $1_S(d) - \sigma$. We will show that, for any $\eps > 0$,
\begin{equation}\label{e:toShow}
\Pr\left(\sup_A \left| \langle F_A, Y \rangle \right| \geqslant 6\eps \sigma \right) = o(N^{-100}).
\end{equation} 
Then, with probability $1 - o(N^{-100})$, we have that 
\[\E_{x,d}1_A(x)1_A(x+d)1_A(x+2d)1_S(d) > \sigma \E_{x,d} 1_A(x) 1_A(x+d) 1_A(x+2d) - 6\eps \sigma \]
uniformly in $A$. In particular, by Varnavides' theorem \cite{Var59}, the average number of 3APs in $A$ (that is, $\E_{x,d} 1_A(x) 1_A(x+d) 1_A(x+2d)$) is bounded away from zero (uniformly in $N$). Sending $\eps \to 0$ yields that, with probability $1-o(N^{-100})$, 
\[\E_{x,d}1_A(x)1_A(x+d)1_A(x+2d)1_S(d) > 0, \]
uniformly in $A$.

It remains to show (\ref{e:toShow}). To this end, let $\eps>0$ and induce Conjecture \ref{c:conj} and Corollary \ref{cor:polyConj} to write 
\[ F_A = \sum_j c_{A,j}\phi_j + e_A,\]
where the sum is over $N^{O_\eps(1)}$ nilsequences of complexity $O_\eps(1)$. Then,
\begin{align*}
\Pr\left(\sup_A \left| \langle F_A, Y \rangle \right| \geqslant 6\eps \sigma \right) &\leqslant \Pr\left(\sup_A \left( \sum_j \left|c_{A,j}\langle \phi_j, Y \rangle \right| + \left| \langle e_A, Y \rangle \right| \right) \geqslant 6\eps \sigma \right)\nonumber \\
&\leqslant  \Pr\left(\sup_j  \left|\langle \phi_j, Y \rangle \right|  \geqslant 3\eps \sigma \right) \\ & \quad +  \Pr\left(\sup_A  \left| \langle e_A, Y \rangle \right| \geqslant 3\eps \sigma \right).\numberthis \label{e:split}
\end{align*}
We will work on each of these terms separately. 

Firstly, by the union bound, we have that 
\begin{equation}\label{e:unionUpper}
\Pr\left(\sup_j  \left|\langle \phi_j, Y \rangle \right|  \geqslant 3\eps \sigma \right) \leqslant N^{O_\eps(1)}\Pr \left( \left|\langle \phi, Y \rangle \right|  \geqslant 3\eps \sigma \right), \end{equation}
where $\phi$ is some 2-step nilsequence of complexity $O_\eps(1)$. In particular, $\phi$ is  bounded in terms of $\eps$; that is, $\left| \phi(d) \right| = O_\eps(1)$ for $d\in [N]$. Note then that the $\phi(d)Y(d)$ are independent mean zero random variables with variance $O_\eps(\sigma)$. By Bernstein's inequality (\cite{Ber46}), 
\begin{equation}\label{e:bern} \Pr \left( \left|\langle \phi, Y \rangle \right|  \geqslant 3\eps \sigma \right) \leqslant \exp\left(-C_\eps\sigma N \right).
\end{equation}
Combining this inequality with (\ref{e:unionUpper}), we have 
\begin{align} \label{e:mainBound}
\Pr\left(\sup_j  \left|\langle \phi_j, Y \rangle \right|  \geqslant 3\eps \sigma \right) &\leqslant \exp\left(O_\eps(\log N) -C_\eps\sigma N\right) \nonumber \\ 
&= o(N^{-100}),
\end{align}
since $\sigma = \omega(\log N / N)$.

For the error term, using again Bernstein's inequality in the penultimate line, we have 
\begin{align}\label{e:error}
\Pr \left(\sup_A  \left| \langle e_A, Y \rangle \right| \geqslant 3\eps \sigma \right) 
&\leqslant  \Pr \left(\eps \sum_{d=1}^N \left| Y(d) \right| \geqslant 3N\eps \sigma \right) \nonumber\\
&\leqslant  \Pr \left( \sum_{d=1}^N \left( \left| Y(d) \right| -2\sigma(1-\sigma) \right) \geqslant N \sigma (1+2\sigma) \right) \nonumber\\
&\leqslant \exp \left( -CN\sigma \right) \nonumber\\
&= o(N^{-100}).
\end{align}

Now, combining (\ref{e:split}), (\ref{e:mainBound}) and (\ref{e:error}), we have that 
\[\Pr\left(\sup_A|\langle F_A, Y \rangle | \geqslant 6\eps \sigma \right) = o(N^{-100}),\]
when $\sigma = \omega(\log N /N)$, completing the proof.
\end{proof}

\begin{thm}\label{t:logIntBound}
Let $S \subset [N]$ be formed by letting each $d \in [N]$ lie in $S$ independently with probability $\mu(d) = \omega(1/d)$. If Conjecture \ref{c:conj} holds, then Szemer\'{e}di's theorem for $k=3$ with common differences in $S$ holds with probability $1-o(N^{-100})$.
\end{thm}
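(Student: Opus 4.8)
The plan is to reduce to Theorem~\ref{t:intBound} by a dyadic decomposition of $[N]$ according to the size of $\mu(d)$. Write $Y(d) := 1_S(d) - \mu(d)$, which is again a mean-zero independent random variable for each $d$, now with $\Var(Y(d)) = \mu(d)(1-\mu(d)) \leqslant \mu(d)$. Exactly as in the proof of Theorem~\ref{t:intBound}, it suffices to show that for every $\eps > 0$,
\[ \Pr\left(\sup_A \left| \langle F_A, Y \rangle \right| \geqslant 6\eps\, \bar\mu \right) = o(N^{-100}), \]
where $\bar\mu := \E_{d\in[N]} \mu(d)$ is the mean density; indeed $\langle F_A, 1_S\rangle \geqslant \bar\mu\, \E_{x,d}1_A(x)1_A(x+d)1_A(x+2d) - 6\eps\bar\mu$ uniformly in $A$ then finishes via Varnavides' theorem and sending $\eps \to 0$ as before. (If $\bar\mu$ is itself too small one first notes that $\mu(d) = \omega(1/d)$ forces $\bar\mu = \omega(\log N / N)$, which is exactly the hypothesis that made Theorem~\ref{t:intBound} work; so $\bar\mu N = \omega(\log N)$.)

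Next I would invoke Conjecture~\ref{c:conj} and Corollary~\ref{cor:polyConj} to write $F_A = \sum_j c_{A,j}\phi_j + e_A$ over $N^{O_\eps(1)}$ nilsequences of complexity $O_\eps(1)$, and split the probability as in~(\ref{e:split}) into a nilsequence term $\Pr(\sup_j |\langle \phi_j, Y\rangle| \geqslant 3\eps\bar\mu)$ and an error term $\Pr(\sup_A |\langle e_A, Y\rangle| \geqslant 3\eps\bar\mu)$. For the nilsequence term, after the union bound over the $N^{O_\eps(1)}$ choices of $\phi$, the key point is that $\phi(d)Y(d)$ are independent, mean-zero, bounded by $O_\eps(1)$, with $\sum_d \Var(\phi(d)Y(d)) = O_\eps(\sum_d \mu(d)) = O_\eps(\bar\mu N)$. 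Bernstein's inequality then gives $\Pr(|\langle\phi,Y\rangle| \geqslant 3\eps\bar\mu) \leqslant \exp(-C_\eps \min(\bar\mu^2 N / (\bar\mu N), \bar\mu N)) = \exp(-C_\eps \bar\mu N)$ (the two regimes of Bernstein coincide up to constants here since the per-term bound is $O_\eps(1)$ and $\bar\mu \leqslant 1$), which beats the $\exp(O_\eps(\log N))$ from the union bound precisely because $\bar\mu N = \omega(\log N)$. The error term is handled identically to~(\ref{e:error}): bound $|\langle e_A, Y\rangle| \leqslant \eps\, \E_d |Y(d)|$, recenter $|Y(d)|$ around its mean $2\mu(d)(1-\mu(d)) \leqslant 2\mu(d)$, and apply Bernstein once more to the independent bounded variables $|Y(d)| - \E|Y(d)|$ to get $\exp(-C\bar\mu N) = o(N^{-100})$.

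The main obstacle, and the only place the non-uniform density genuinely intervenes, is bookkeeping the variance: one must verify that the relevant concentration parameter is $\sum_d \Var(Y(d)) \asymp \bar\mu N = \omega(\log N)$ rather than anything smaller, and that the normalization threshold $\eps\bar\mu$ (not $\eps \mu(d)$ pointwise) is the right one for Varnavides to apply. A clean way to avoid any subtlety is the dyadic decomposition alluded to above: partition $[N] = \bigsqcup_{i} D_i$ with $D_i := \{d : 2^{-i-1} < \mu(d) \leqslant 2^{-i}\}$, there being $O(\log N)$ nonempty classes; on each $D_i$ the density is constant up to a factor $2$, so Theorem~\ref{t:intBound} (or rather its proof) applies on each class that contributes a positive proportion of the total mass $\bar\mu N$, and summing the $O(\log N)$ failure probabilities $o(N^{-100})$ still gives $o(N^{-100})$. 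Either route works; I would present the direct Bernstein computation since it is shorter, flagging only that $\mu(d) = \omega(1/d) \Rightarrow \sum_{d\le N}\mu(d) = \omega(\log N)$ is what powers the whole estimate.
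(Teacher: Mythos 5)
Your probabilistic work is fine and in fact slightly cleaner than the paper's: centering $Y(d)=1_S(d)-\mu(d)$ at the pointwise means keeps the summands genuinely mean-zero, the observation $\mu(d)=\omega(1/d)\Rightarrow\sum_{d\le N}\mu(d)=\omega(\log N)$ is exactly the right source of the $\exp(-c_\eps\bar\mu N)$ versus $N^{O_\eps(1)}$ comparison, and the error-term estimate goes through as you say. The gap is in the deterministic step. From $\sup_A|\langle F_A,Y\rangle|<6\eps\bar\mu$ you conclude $\langle F_A,1_S\rangle\geqslant\bar\mu\,\E_{x,d}1_A(x)1_A(x+d)1_A(x+2d)-6\eps\bar\mu$, but with your centering $\langle F_A,1_S\rangle=\langle F_A,\mu\rangle+\langle F_A,Y\rangle$, so this requires $\langle F_A,\mu\rangle=\E_d F_A(d)\mu(d)\geqslant\bar\mu\,\E_d F_A(d)$, i.e.\ a nonnegative correlation between $F_A$ and $\mu$, which is false in general: the measure $\mu$ typically puts only a $O(1/\log N)$ fraction of its mass on any dyadic range of $d$, and a dense set such as $A=(N/3,2N/3]$ has $F_A(d)=0$ for all $d>N/6$, so the weighted average can be much smaller than the unweighted one. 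What you actually need is a \emph{weighted} Varnavides statement, $\E_d F_A(d)\mu(d)\gg_\delta\bar\mu$ uniformly over $\delta$-dense $A$; this does not follow from $\E_d F_A(d)\gg_\delta 1$ and needs an argument (e.g.\ using that $\mu$ may be taken non-increasing, that Varnavides gives $\sum_{d\le M}F_A(d)\gg_\delta M$ at every scale $M\geqslant M_0(\delta)$, and partial summation). Your dyadic fallback does not repair this: no single level set $D_i$ of $\mu$ need carry a positive proportion of the mass, and even when $D_i$ is an interval of differences a dense $A$ need not contain any 3AP with difference in that particular interval, so "apply Theorem \ref{t:intBound} on each class" has no Varnavides input to run on.

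For comparison, the paper routes around this by centering at the constant $\sigma=\E_d\mu(d)$, i.e.\ $Y(d)=1_S(d)-\sigma$, so that the main term is $\sigma\,\E_d F_A(d)$ and the plain Varnavides theorem applies verbatim; the price is that $Y$ is no longer mean-zero, and the paper absorbs the resulting drift into the claim $|\E_d\E(\phi(d)Y(d))|=o(\sigma)$ before invoking Bernstein. So the two routes push the non-uniformity of $\mu$ into different places: yours into a weighted lower bound on the 3AP count (which you left unjustified), the paper's into a drift estimate for nilsequences against $\mu-\sigma$. As written, your proposal is missing the weighted Varnavides ingredient, and without it the concluding positivity of $\langle F_A,1_S\rangle$ does not follow.
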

\begin{proof}
The proof is essentially the same as that of Theorem \ref{t:intBound}. We will focus on points of the argument that differ.

We will show that for any $\eps > 0$,
\begin{equation}\label{e:toShow2}
\Pr\left(\sup_A \left| \langle F_A, Y \rangle \right| \geqslant 6\eps \sigma \right) = o(N^{-100}),
\end{equation}
where this time $Y(d) = 1_S(d) - \sigma$, $1_S(d)$ is Bernoulli with parameter $\mu(d)=\omega(1/d)$ and $\sigma = \E_d \mu(d)$. Having established (\ref{e:toShow2}), it is easily checked that the rest of the proof is identical with this slightly-different definition of $\sigma$.

The main difference in establishing (\ref{e:toShow2}) is that now  the $Y(d)$ are not mean zero, and so we cannot conclude (\ref{e:bern}) immediately from Bernstein's inequality. However, one checks that 
\[\left| \E_d\E(\phi(d)Y(d)) \right| = o(\sigma), \]
and so modifying to the left hand side of (\ref{e:bern}) before inducing Bernstein's inequality yields a proof of (\ref{e:bern}). One also easily proves (\ref{e:error}) for our newly-defined $Y(d)$. We omit the details; the rest of the argument remains the same.   
\end{proof}

The following corollary extends the finitary result obtained in Theorem \ref{t:logIntBound} to a result in $\N$. It says, in particular, that the conjectures we inherited from \cite[Problem 31]{Fra16} and  \cite[Conjecture 2.5]{FLW16} are true for $k=3$ under our Conjecture \ref{c:conj}.

\begin{cor}\label{cor:infinitary}
Let $S \subset \N$ be chosen at random with $\Pr(n\in S) = \omega(1/n)$. Then, if Conjecture \ref{c:conj} holds, it is almost surely the case that all subsets of $\N$ with positive upper density contain a 3-term arithmetic progression with common difference in $S$.
\end{cor}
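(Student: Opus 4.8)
The plan is to deduce the infinitary statement from the finitary Theorem~\ref{t:logIntBound} by a standard compactness/diagonalisation argument, reducing the almost sure statement over $\N$ to almost sure statements over growing windows $[N]$. First I would fix a probability measure on subsets $S \subseteq \N$ via $\Pr(n \in S) = \mu(n) = \omega(1/n)$, and note that the restrictions $S \cap [N]$ are then distributed exactly as in Theorem~\ref{t:logIntBound}, with $\mu$ restricted to $[N]$ still satisfying $\mu(d) = \omega(1/d)$ on that range (the asymptotic growth condition is a tail condition, hence inherited by all sufficiently large windows). Next I would take a sequence $N_i \to \infty$, say $N_i = 2^i$, and apply Theorem~\ref{t:logIntBound} to each $[N_i]$: for each $i$, with probability $1 - o(N_i^{-100})$, Szemer\'edi's theorem for $k=3$ with common differences in $S \cap [N_i]$ holds in $[N_i]$. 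Since $\sum_i N_i^{-100} < \infty$, the Borel--Cantelli lemma gives that almost surely, for all sufficiently large $i$, every $\delta$-dense subset of $[N_i]$ contains a nontrivial $3$AP with common difference in $S \cap [N_i]$, for every fixed $\delta > 0$ (intersecting the relevant probability-one events over a countable sequence $\delta \to 0$, e.g.\ $\delta = 1/m$).

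Having fixed such an $S$, I would then run the usual transference from the finitary to the infinitary setting. Let $B \subseteq \N$ have positive upper density, so there is $\delta > 0$ and an infinite sequence of scales $M_j \to \infty$ with $|B \cap [M_j]| \geqslant \delta M_j$. The issue is that $B \cap [M_j]$ need not be dense in a window $[N_i]$ of our chosen sequence, and that a $3$AP found inside $[N_i]$ using differences from $S \cap [N_i]$ must be genuinely nontrivial. To handle the first point I would pass to a subinterval: for $M_j$ large, at least a $\delta/2$-fraction (say) of the dyadic blocks $[2^{i-1}, 2^i) \subseteq [M_j]$ have $B$-density at least $\delta/4$ by an averaging argument, so in particular for infinitely many $i$ the set $B$ is $(\delta/4)$-dense in some window comparable to $[2^i]$; shifting and rescaling, and using that Szemer\'edi's theorem with differences in $S$ holds in each $[N_i]$ for $i$ large, produces a nontrivial $3$AP $(x, d)$ with $x, x+d, x+2d \in B$ and $d \in S$. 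One must be slightly careful that the arithmetic progression lies in $B$ itself and not merely in a translate; this is arranged by working directly with $B \cap [c2^{i-1}, c2^i)$ translated to start at the origin, noting that common differences are translation-invariant and that $S \cap [N_i]$ already contains all the relevant small differences.

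The main obstacle, and the one place requiring genuine care, is the interaction between the \emph{fixed} random set $S$ and the \emph{adversarially chosen} set $B$: Theorem~\ref{t:logIntBound} is quantified as ``for all $\delta$, a.a.s., for all $A$'', so once we condition on the probability-one event (over the countable family of $\delta$'s and scales $N_i$) the conclusion is genuinely uniform over all dense $A \subseteq [N_i]$, which is exactly what is needed to apply it to the a priori unknown $B$. Thus no further probabilistic input is required beyond Borel--Cantelli, and the remainder is the routine compactness argument extracting an infinitary Szemer\'edi statement from a uniform finitary one, as in the classical equivalence of the two forms of Szemer\'edi's theorem. I would also remark that the restriction to $k=3$ is inessential for this deduction: it is inherited solely from the corresponding restriction in Theorem~\ref{t:logIntBound} (equivalently, in Conjecture~\ref{c:conj}).
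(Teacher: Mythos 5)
Your proposal is correct in substance and follows the same basic strategy as the paper: deduce the infinitary statement from the uniform-in-$A$ finitary Theorem \ref{t:logIntBound} by Borel--Cantelli, intersecting over a countable family of densities $\delta = 1/m$ at the end. The difference is in the windowing. The paper applies Borel--Cantelli over \emph{all} scales $N$ (the bound $o(N^{-100})$ is summable over all $N$, so there is no need to thin to dyadic scales) and uses the identity $I_m = \limsup_N I_{m,N}$: a set of upper density at least $1/m$ is $1/m$-dense in $[N]$ for infinitely many $N$, so the finitary theorem is applied to $B \cap [N]$ at those scales directly, with the common difference automatically in $S \cap [N] \subseteq S$; no block decomposition, translation or rescaling is needed. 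Your dyadic route also works, with two caveats. First, your intermediate averaging claim --- that at least a $\delta/2$-fraction of the dyadic blocks of $[M_j]$ have $B$-density at least $\delta/4$ --- is false as stated, since the blocks have geometrically growing sizes (take $B = [M_j/2, M_j]$: only $O(1)$ of the roughly $\log_2 M_j$ blocks meet $B$). What is true, and is all you need, is that $B$ is dense in arbitrarily large dyadic windows, and this is immediate without any blocks: if $2^{i-1} < M_j \leqslant 2^i$ then $|B \cap [2^i]| \geqslant \delta M_j \geqslant (\delta/2)2^i$, so $B \cap [2^i]$ is $(\delta/2)$-dense in $[2^i]$ and the finitary statement applies at scale $2^i$ with no translation at all. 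Second, your translation step is nevertheless sound --- a $3$AP in a translate of $B$ pulls back to a $3$AP in $B$ with the same common difference $d \in S \cap [N_i] \subseteq S$, which also makes the remark that ``$S \cap [N_i]$ already contains all the relevant small differences'' unnecessary --- so your argument needs only this one-line repair, while the paper's $\limsup$ formulation (or the rounding-up observation above) sidesteps the issue entirely.
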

\begin{proof}
For $B \subset \N$, let $E_B$ be the event that $B$ contains a 3AP with common difference in $S$, and let $E_{B,N}$ be the event that $B \cap [N]$ contains a 3AP with common difference in $S \cap [N]$. Then $E_B = \bigcup_{N=1}^\infty E_{B,N}$. 

For $m=1,2,\ldots$, let $I_m = \{ B \subset \N : \limsup_N |B\cap [N]|/N \geqslant 1/m\}$,  and for each $N$ let $I_{m,N} = \{ B \subset \N : |B \cap [N]|/N \geqslant 1/m\}$. Observe that $I_m = \limsup_N I_{m,N}$.

Let $G_m$ be the event that all $B \in I_m$ contain a 3AP with common difference in $S$, that is $G_m = \cap_{B \in I_m} E_B$. Then 
the probability that Szemer\'{e}di's theorem with common difference in $S$ holds is given by $\Pr\left(\bigcup_{m=1}^\infty G_m \right)$.
By the monotone convergence theorem, this is equal to $\lim_{m\to \infty} \Pr(G_m)$. We will show that $\Pr(G_m^c ) = 0$ for all $m$. To this end we compute, 
\[ \Pr(G_m^c) = \Pr \left( \bigcup_{B \in \limsup_N I_{m,N}} E_B^c \right) \leqslant \Pr \left( \limsup_N \bigcup_{B \in I_{m,N}} E_{B,N}^c \right). \]

From Theorem \ref{t:logIntBound} it follows that $\Pr \left( \bigcup_{B \in I_{m,N}} E_{B,N}^c \right) = o(N^{-100})$, so that $\sum_{N=1}^\infty  \Pr \left( \bigcup_{B \in I_{m,N}} E_{B,N}^c \right) < \infty.$ Thus, by the Borel--Cantelli lemma, we have that $\Pr \left( \limsup_N \bigcup_{B \in I_{m,N}} E_{B,N}^c \right)  = 0$. The result follows. 
\end{proof}

\section{Over finite fields}\label{s:finiteFields}
Recall that for 2-term arithmetic progressions, $\Pr(d \in S) = \omega(\log N/N)$ was sufficient for Szemer\'{e}di's theorem with common difference in $S$ to hold asymptotically almost surely. It is not difficult to prove that the analogous fact is true over finite fields: that $\Pr(d \in S) = \omega(\log(p^n)/p^n)$ is sufficient. (One uses similar Fourier-analytic arguments to the $k=2$ case over the integers; here there is only a discrete set of additive characters so the argument is even easier.)

We showed in the previous section that, under Conjecture \ref{c:conj}, if elements of $[N]$ are chosen to lie in $S$ with probability $\omega(\log |[N ]|/ |[N]|)$ then Szemer\'{e}di's theorem for $k=3$ almost surely holds with common difference in $S$. In this section, we show that the analogous result over $\F_p^n$ is not true. In fact, if elements are selected to lie in $S$ independently with probability
\[\Pr(d \in S) = \frac{c n^2}{p^n} = \frac{c \log_p^2 |\F_p^n|}{|\F_p^n|} ,\]
where $c=1/2-o(1)$, then there will almost surely exist a set $A$ with positive density such that $A$ contains no 3APs with common difference in $S$. (Actually we deal with a slightly different probability model for convenience, but the above statement is an easy consequence of Corollary \ref{c:main}.)

The reason for the different behavior is that there are far more quadratic obstructions to 3APs in $\F_p^n$. Indeed, for $M\in M_n(\F_p)$, define $A_M = \{ x: x^\top M x = 0\}$ and note that $A_M$ has positive density (uniformly in $n$). One observes that if $x, x+d, x+2d \in A_M$, then $d^\top M d =0$, that is, $d\in A_M$. It follows that if all $A_M$ are to have 3APs with common difference in $S$, then $S$ must have the following property: for all $M \in M_n(\F_p)$, there exists $d\in S$ such that $d^\top M d = 0$.   

\begin{thm}\label{t:main}
Fix $p$ an odd prime. If $S\subset \F_p^n$ is formed by selecting at most $\binom{n+1}{2} - 11 n\log_p n$ elements of $\F_p^n$ independently at random,  then almost surely as $n\to \infty$ there exists some $M \in M_n(\F_p)$ such that $d^\top M d \ne 0$ for all $d\in S$.   
\end{thm}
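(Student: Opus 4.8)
The plan is to show that, with probability $1 - o(1)$, the rank-one symmetric matrices $dd^\top$, $d \in S$, are linearly independent in the space $\mathrm{Sym}_n(\F_p)$ of symmetric $n\times n$ matrices over $\F_p$, which has dimension $\binom{n+1}{2}$; this already gives the theorem. Indeed, since $p$ is odd, $d^\top M d$ depends on $M$ only through its symmetric part $\tfrac12(M + M^\top)$, so it suffices to produce a \emph{symmetric} $M$. If the matrices $dd^\top$ ($d \in S$) are linearly independent, then the linear map $M \mapsto (d^\top M d)_{d\in S}$ from $\mathrm{Sym}_n(\F_p)$ to $\F_p^S$ — which, for the trace pairing $\langle M, N\rangle = \mathrm{tr}(MN)$, is $M \mapsto (\langle M, dd^\top\rangle)_{d\in S}$ — is surjective, so some $M$ is sent to the all-ones vector, and then $d^\top M d = 1 \ne 0$ for every $d \in S$. (Linear independence already forces $0 \notin S$, since $dd^\top = 0$ only when $d = 0$.) Enlarging $S$ can only make the conclusion harder, so we may assume $|S| = m := \binom{n+1}{2} - \lceil 11 n\log_p n\rceil$.

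To prove linear independence, expose the elements of $S$ one at a time as $d_1, \dots, d_m$ and bound the probability of a collision. Let $V_{i-1} = \spa\{d_1 d_1^\top, \dots, d_{i-1} d_{i-1}^\top\}$. On the event that $d_1 d_1^\top, \dots, d_{i-1} d_{i-1}^\top$ are linearly independent, $\dim V_{i-1} = i - 1$, so its orthogonal complement $W_{i-1} := V_{i-1}^\perp$ — which is exactly the space of quadratic forms $x \mapsto x^\top M x$ ($M \in \mathrm{Sym}_n(\F_p)$) that vanish at $d_1, \dots, d_{i-1}$ — has dimension $\binom{n+1}{2} - (i-1) \ge \binom{n+1}{2} - (m-1) \ge 11 n\log_p n$. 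Since $d_i d_i^\top \in V_{i-1}$ if and only if $d_i d_i^\top \perp W_{i-1}$, i.e. $d_i^\top M d_i = 0$ for all $M \in W_{i-1}$, the step-$i$ failure event is exactly $\{d_i \in C(W_{i-1})\}$, where
\[ C(W) := \{\, d \in \F_p^n : d^\top M d = 0 \text{ for all } M \in W \,\}. \]
Its conditional probability is at most $2\,|C(W_{i-1})|/p^n$ (the factor $2$ covers the case that $S$ is a uniform $m$-subset rather than $m$ i.i.d.\ uniform points, using $m \le n^2 \ll p^n/2$). Summing over $i \le m \le n^2$, everything reduces to the following statement.

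\textbf{Key Lemma (the main obstacle).} There is an absolute constant $c > 0$ with the following property: if $W \le \mathrm{Sym}_n(\F_p)$ and $\dim W \ge c\,n\,(1 + \log_p(1/\delta))$ then $|C(W)| \le \delta p^n$; in particular $\dim W \ge 11 n\log_p n$ forces $|C(W)| \le p^n/n^{3}$ for all large $n$. By duality this is the same as saying that a set $C \subseteq \F_p^n$ of density $\delta$ has $\spa_{\F_p}\{dd^\top : d \in C\}$ of codimension at most $c\,n\,(1 + \log_p(1/\delta))$ in $\mathrm{Sym}_n(\F_p)$ — equivalently, at most that many linearly independent quadratic forms vanish on $C$. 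This bound is of the right shape: if $C$ is a subspace of codimension $t$, then $\spa\{dd^\top : d \in C\}$ is the full symmetric-matrix space of that subspace, of codimension $tn - \binom{t}{2} \le tn = n\log_p(1/\delta)$. I expect the Key Lemma to be the crux of the proof. The natural attack is induction on $n$ by slicing $C$ along a coordinate: if all of $C$ lies in a coordinate hyperplane one restricts to it (losing $n$ dimensions and passing to density $\delta p$ in dimension $n-1$); otherwise some slice $C_a$ with $a \ne 0$ has density comparable to $\delta$, and the matrices $dd^\top$ with $d$ in that slice, together with their pairwise differences, supply both the full inductive bound for the bottom-right $(n-1)\times(n-1)$ block and roughly $n - 1 - \log_p(1/\delta)$ extra dimensions in the last row and column — the latter from the elementary fact that a density-$\delta$ set cannot lie in an affine subspace of codimension more than $\log_p(1/\delta)$. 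Each of the at most $n$ recursion steps loses $O(1 + \log_p(1/\delta))$ dimensions, which gives the bound; the constant $11$ in the hypothesis is generously large enough to absorb the implied constants and the lower-order $\binom{t}{2}$-type terms.

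Granting the Key Lemma, each $W_{i-1}$ has dimension $\ge 11 n\log_p n$, so $|C(W_{i-1})|/p^n \le n^{-3}$, and hence the probability that the matrices $dd^\top$, $d \in S$, fail to be linearly independent is at most $\sum_{i \le m} 2 n^{-3} \le 2 n^2 \cdot n^{-3} = o(1)$. By the first paragraph, this yields, almost surely as $n \to \infty$, a symmetric matrix $M$ — in particular an $M \in M_n(\F_p)$ — with $d^\top M d \ne 0$ for all $d \in S$, which is the theorem.
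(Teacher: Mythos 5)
Your outer argument matches the paper's: reduce to showing that the rank-one matrices $dd^\top$ (equivalently, the Veronese images of the points of $S$) are almost surely linearly independent, and bound the failure probability at each exposure step by the density of the common zero set $C(W)$ of a space $W$ of quadratic forms of dimension at least $11n\log_p n$. The problem is that your ``Key Lemma'' --- that $\dim W \geqslant c\,n(1+\log_p(1/\delta))$ forces $|C(W)| \leqslant \delta p^n$ --- is exactly the crux, and you do not prove it; you only sketch an induction on $n$ by slicing, and that sketch has a real gap. In the inductive step you want to add ``the full inductive bound for the bottom-right $(n-1)\times(n-1)$ block'' to ``$\approx n-1-\log_p(1/\delta)$ extra dimensions in the last row and column'' coming from differences $dd^\top - ee^\top$ with $d,e$ in a common slice. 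But those differences have nonzero top-left blocks ($d'd'^\top - e'e'^\top$), so they do not lie in the kernel of the projection onto the top-left block, and the inequality $\dim U \geqslant \dim \pi(U) + \dim(U \cap \ker\pi)$ does not let you simply add the two counts; producing genuine elements of $U\cap\ker\pi$ (or otherwise decoupling the two contributions) is precisely the missing work. (The stated dichotomy also needs a pigeonhole repair: if $C$ is not contained in the hyperplane it does not follow that some slice with $a\neq 0$ has density comparable to $\delta$; you must first pass to the half of $C$ inside or outside the hyperplane.) As written, the proposal is therefore incomplete.

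The Key Lemma itself is true, and the paper closes exactly this gap analytically rather than combinatorially: by orthogonality of characters, $\Pr_d\bigl(d \in C(W)\bigr) = \E_{d,\,M\in W}\,\omega^{d^\top M d} \leqslant \E_{M \in W}\, p^{-\frac12 \rank M}$ using the Gauss-sum estimate $\bigl|\E_d\, \omega^{d^\top M d}\bigr| = p^{-\frac12\rank M}$, and then one splits $W$ by rank: there are at most $p^{2nr}$ symmetric matrices of rank less than $r$, so with $r = 5\log_p n$ and $\dim W = 11n\log_p n$ one gets $\Pr_d\bigl(d\in C(W)\bigr) \leqslant p^{-n\log_p n} + n^{-5/2} = o(1/n^2)$, which is exactly the bound your union bound over the at most $n^2$ exposure steps requires. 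If you replace your sketched slicing induction by this character-sum argument (or supply a correct combinatorial proof of the Key Lemma), the rest of your write-up --- the reduction to symmetric $M$, the surjectivity/all-ones-vector conclusion from linear independence, and the sequential union bound --- goes through and is essentially the paper's proof.
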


\begin{cor}\label{c:main}
Fix $p$ an odd prime. If $S\subset \F_p^n$ is formed by selecting at most $\binom{n+1}{2} - 11 n\log_p n$ elements of $\F_p^n$ independently at random, then almost surely as $n\to \infty$ there exists some set $A \subset \F_p^n$ of positive density such that $A$ contains no 3-term arithmetic progression with common difference in $S$.
\end{cor}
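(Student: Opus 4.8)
The plan is to deduce the corollary directly from Theorem~\ref{t:main}: given a matrix $M$ with $d^\top M d \neq 0$ for all $d \in S$, the set $A_M = \{x \in \F_p^n : x^\top M x = 0\}$ will serve as the required dense set. Concretely, I would first apply Theorem~\ref{t:main} to obtain an almost-sure event on which there exists $M \in M_n(\F_p)$ with $d^\top M d \neq 0$ for every $d \in S$; fix such an $M$ and put $A := A_M$.

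To see that $A$ contains no non-trivial $3$AP with common difference in $S$, write $q(y) := y^\top M y$ and let $B(u,v) := u^\top M v + v^\top M u$ be the associated symmetric bilinear form, so that $q(u \pm d) = q(u) + q(d) \pm B(u,d)$. Taking $u = x+d$ and adding the identities for $q(x) = q(u-d)$ and $q(x+2d) = q(u+d)$ gives
\[ q(x) + q(x+2d) = 2\,q(x+d) + 2\,q(d). \]
Hence, if $(x,d)$ is a $3$AP in $A$ with $d \neq 0$, then $q(x) = q(x+d) = q(x+2d) = 0$ forces $2\,q(d) = 0$, and since $p$ is odd this means $d^\top M d = q(d) = 0$; if moreover $d \in S$ this contradicts the choice of $M$. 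So on the almost-sure event, $A_M$ has no non-trivial $3$AP with common difference in $S$.

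It remains to verify that $A_M$ has positive density, uniformly in $n$, as already claimed in the discussion preceding Theorem~\ref{t:main}. First, $M + M^\top \neq 0$: otherwise $q \equiv 0$ (again using that $p$ is odd), so $d^\top M d = 0$ for all $d$, which is incompatible with the conclusion of Theorem~\ref{t:main} (the degenerate case $S = \emptyset$ being trivial --- take $A = \F_p^n$). Thus $q$ is a nonzero quadratic form; diagonalising it over $\F_p$ and setting $r := \rank(M + M^\top) \geq 1$, the classical count for the number of zeros of a quadratic form over a finite field --- noting that Chevalley--Warning forces any anisotropic form to have rank at most $2$ --- yields $|A_M| \geq p^{n-2}$ for all $n$. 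Hence $A_M$ has density at least $p^{-2}$, a constant depending only on $p$, which is the required positive density. Combining the three points completes the proof.

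I do not expect any real obstacle here: the entire content of the corollary is carried by Theorem~\ref{t:main}, and the rest is a short and standard manipulation. The only point requiring a little care is the uniform density bound for $A_M$, where one separates the extremal case of an anisotropic rank-$2$ form (for which $A_M$ is, up to a linear change of variables, $\{0\}^2 \times \F_p^{n-2}$, of size exactly $p^{n-2}$) from all other ranks (for which $|A_M| = p^{n-1}(1 + o(1))$); both estimates are classical.
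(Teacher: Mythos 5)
Your proof is correct and takes essentially the same route as the paper: invoke Theorem~\ref{t:main} to obtain $M$ with $d^\top M d \neq 0$ for all $d \in S$, and take $A = A_M = \{x : x^\top M x = 0\}$, which the discussion preceding Theorem~\ref{t:main} already identifies as a dense set in which any $3$AP forces $d^\top M d = 0$. Your additional details (the polarization identity and the uniform density bound $|A_M| \geqslant p^{n-2}$, with the anisotropic rank-$2$ form as the worst case) simply flesh out what the paper leaves as remarks.
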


The remainder of this section will prove Theorem \ref{t:main}. 

It suffices to consider symmetric matrices because if $M \in M_n(\F_p)$ then, letting $M^\prime = (M+M^\top )/2 \in S_n(\F_p)$, we have $d^\top M d = d^\top M^\prime d$ for all $d$.
We identify $S_n(\F_p)$ with $\F_p^{\binom{n+1}{2}}$ naturally; we will write $M_v$ for the matrix corresponding to a vector $v$, and $v_M$ for the vector corresponding to a matrix $M$. Define $\varphi: \F_p^n \to \F_p^{{\binom{n+1}{2}}}$ to be the degree 2 Veronese map, that is $(d_1,\ldots, d_n) 
\mapsto (d_i d_j)_{1 \leqslant i \leqslant j \leqslant n}$. Then $d^\top Md = \varphi(d)\cdot v_M$ and so $d^\top M d \ne 0$ for all $d\in S$ if and only if $v_M \not \in \cup_{d\in S} \varphi(d)^\perp$.

The following lemma demonstrates that if $\varphi(S)$ is linearly independent then there exists some matrix $M$ with $v_M \not \in \cup_{d\in S} \varphi(d)^\perp$. 

\begin{lemma}\label{l:linindHyps}
Let $\{v_1, \ldots, v_k\}$ be linearly independent in an $m$-dimensional vector space over $\F_p$. Then 
\[ \left| \bigcup_{i=1}^k v_i^\perp \right| = p^m\left( 1- \left( \frac{p-1}{p}\right)^k \right).\]
In particular, 
\[ \bigcup_{i=1}^k v_i^\perp  \subsetneq \F_p^m.\]
\end{lemma}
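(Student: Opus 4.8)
\textbf{Proof proposal for Lemma \ref{l:linindHyps}.}

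The plan is to compute the size of the complement $\F_p^m \setminus \bigcup_{i=1}^k v_i^\perp$ and show it equals $p^m \left( \frac{p-1}{p} \right)^k$, from which the displayed formula follows by subtraction. The key observation is that a vector $w \in \F_p^m$ lies outside $\bigcup_{i=1}^k v_i^\perp$ precisely when $\langle v_i, w \rangle \ne 0$ for every $i = 1, \ldots, k$. So I want to count the number of $w$ for which the linear functional $w \mapsto (\langle v_1, w\rangle, \ldots, \langle v_k, w \rangle) \in \F_p^k$ lands in $(\F_p^*)^k$.

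First I would observe that since $\{v_1, \ldots, v_k\}$ is linearly independent, the map $L \colon \F_p^m \to \F_p^k$ given by $L(w) = (\langle v_1, w\rangle, \ldots, \langle v_k, w\rangle)$ is surjective: its image is a subspace whose orthogonal complement is $\spa\{v_1, \ldots, v_k\}$'s annihilator, but more directly, the matrix with rows $v_1, \ldots, v_k$ has rank $k$, so $L$ is onto. Hence every fiber $L^{-1}(u)$ for $u \in \F_p^k$ has the same cardinality, namely $p^m / p^k = p^{m-k}$. Therefore
\[ \left| \F_p^m \setminus \bigcup_{i=1}^k v_i^\perp \right| = \left| L^{-1}\big( (\F_p^*)^k \big) \right| = (p-1)^k \cdot p^{m-k} = p^m \left( \frac{p-1}{p} \right)^k. \]
Subtracting from $p^m$ gives the claimed identity for $\left| \bigcup_{i=1}^k v_i^\perp \right|$.

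For the ``in particular'' clause, since $k \geqslant 1$ and $p \geqslant 2$ we have $\left( \frac{p-1}{p}\right)^k > 0$, so $\left| \bigcup_{i=1}^k v_i^\perp \right| < p^m$, giving the strict inclusion. I do not anticipate a genuine obstacle here; the only point requiring a moment's care is the surjectivity of $L$, which is exactly where linear independence of the $v_i$ is used (without it the image of $L$ is a proper subspace and the fibers over $(\F_p^*)^k$ could be empty or the functionals could be dependent, breaking the count). Everything else is a routine fiber-counting argument.
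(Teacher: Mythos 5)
Your argument is correct and complete: the surjectivity of $L$ (from linear independence of the $v_i$) gives equal fibers of size $p^{m-k}$, so the complement of the union has size $(p-1)^k p^{m-k}$, which is exactly the computation the paper has in mind when it writes ``linear algebra; we omit the details.'' Nothing further is needed; you have simply supplied the omitted routine details in the standard way.
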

\begin{proof}
Linear algebra; we omit the details.
\end{proof}

The goal will now be to show that, almost surely as $n\to \infty$, the elements $\varphi(d), d\in S$ are linearly independent. Let $\mathcal{W}_k$ be the set of all $k$ dimensional subspaces of $\F_p^{\binom{n+1}{2}}$ and let $W_k \in \mathcal{W}_k$ be a subspace such that $|W_k \cap \Ima \varphi| = \max_{W \in \mathcal{W}_k} |W \cap \Ima \varphi|$. 

\begin{lemma}\label{l:probLinInd}
Let $N = |S|$. The probability that $\varphi(S)$ is linearly independent is bounded below by 
\[ \left( 1- \Pr_{d \in \F_p^n} \left( \varphi(d) \in W_N \right)\right)^{N}.\]
\end{lemma}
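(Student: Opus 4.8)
The plan is to prove this by a union-bound-style argument over the elements of $S$, revealed one at a time. Write $S = \{d_1, \ldots, d_N\}$ where the $d_i$ are chosen independently and uniformly at random from $\F_p^n$ (the selection of "at most" this many elements is handled by noting that linear independence of a superset implies that of a subset, so it suffices to treat the case $|S| = N$ with independent uniform choices). Let $E_i$ be the event that $\varphi(d_i) \notin \spa\{\varphi(d_1), \ldots, \varphi(d_{i-1})\}$. Then $\varphi(S)$ is linearly independent precisely when $\bigcap_{i=1}^N E_i$ holds, and by the chain rule
\[ \Pr\left( \bigcap_{i=1}^N E_i \right) = \prod_{i=1}^N \Pr\left( E_i \,\Big|\, E_1, \ldots, E_{i-1} \right). \]

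The key observation is that, conditioned on $E_1, \ldots, E_{i-1}$, the span $V_{i-1} := \spa\{\varphi(d_1), \ldots, \varphi(d_{i-1})\}$ is some $(i-1)$-dimensional subspace of $\F_p^{\binom{n+1}{2}}$, and $d_i$ is still uniform on $\F_p^n$ independent of everything revealed so far. Hence
\[ \Pr\left( E_i^c \,\Big|\, E_1, \ldots, E_{i-1}, d_1, \ldots, d_{i-1} \right) = \Pr_{d \in \F_p^n}\left( \varphi(d) \in V_{i-1} \right) \leqslant \Pr_{d \in \F_p^n}\left( \varphi(d) \in W_{i-1} \right), \]
using the definition of $W_{i-1}$ as a subspace of dimension $i-1$ maximizing the intersection with $\Ima \varphi$. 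Since $i - 1 \leqslant N$ and $W_k$ is chosen to maximize $|W \cap \Ima\varphi|$ over $k$-dimensional subspaces, a monotonicity check gives $\Pr_{d}(\varphi(d) \in W_{i-1}) \leqslant \Pr_{d}(\varphi(d) \in W_N)$: any $(i-1)$-dimensional subspace extends to an $N$-dimensional one with at least as large an intersection with $\Ima\varphi$. Therefore each conditional probability $\Pr(E_i \mid E_1, \ldots, E_{i-1})$ is at least $1 - \Pr_{d \in \F_p^n}(\varphi(d) \in W_N)$, and multiplying over $i = 1, \ldots, N$ yields the claimed bound.

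The only mild subtlety — and the step I would be most careful about — is the passage from "at most $N$ elements selected independently at random" to the clean model of exactly $N$ i.i.d.\ uniform draws, together with the monotonicity $\Pr_d(\varphi(d) \in W_{i-1}) \leqslant \Pr_d(\varphi(d) \in W_N)$ for $i - 1 \leqslant N$. Both are genuinely routine: for the first, removing elements from $S$ can only help linear independence; for the second, one embeds a smaller-dimensional subspace into a larger one without decreasing the intersection with $\Ima\varphi$, so $|W_k \cap \Ima\varphi|$ is nondecreasing in $k$, hence so is $\Pr_d(\varphi(d) \in W_k) = |W_k \cap \Ima\varphi| / p^n$. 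Everything else is the standard telescoping of conditional probabilities, so I do not anticipate any real obstacle here; the work of the section is rather in the subsequent estimate of $\Pr_d(\varphi(d) \in W_N)$ and in showing it is small enough when $N = \binom{n+1}{2} - 11 n \log_p n$.
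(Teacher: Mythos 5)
Your argument is correct and is essentially the paper's own proof: the paper also reveals $d_1,\ldots,d_N$ one at a time, bounds each conditional non-independence probability by $\Pr_d(\varphi(d)\in W_{i-1})$ via the extremal subspace, and then by $\Pr_d(\varphi(d)\in W_N)$. Your explicit conditioning and the monotonicity of $|W_k\cap \Ima\varphi|$ in $k$ just spell out steps the paper leaves implicit.
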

\begin{proof}
Sampling $S$ by selecting elements $d_1, \ldots, d_N$ at independently at random, the probability that $\varphi(S)$ is linearly independent is bounded below by,
 \begin{align*} 
 &\Pr(d_1 \ne 0)\prod_{i=2}^{N} \Pr(\varphi(d_i) \not \in \spa \{\varphi(d_1), \ldots, \varphi(d_{i-1})\} ) \\
\geqslant &\Pr(d_1 \ne 0)\prod_{i=2}^{N} \Pr(\varphi(d_i) \not \in  W_{i-1}) \\
\geqslant &\left( 1- \Pr_{d \in \F_p^n} \left( \varphi(d) \in W_N \right)\right)^{N}.
\end{align*}
\end{proof}

By Lemma \ref{l:probLinInd}, to show that $\varphi(S)$ is almost surely linearly independent, it suffices now to show that $\Pr_{d \in \F_p^n} \left( \varphi(d) \in W_N \right) = o(1/n^2)$. As an intermediate step, we will show in Proposition \ref{p:boundOnQuads} that 
\[ \Pr_{d \in \F_p^n} \left( \varphi(d) \in W_N \right) \leqslant \E_{{ v \in W_N^\perp}} p^{- \frac{1}{2} \rank M_v}. 
\] 
We separate out the main analytic observations in the following two lemmas. The first follows from orthogonality of characters.

\begin{lemma}\label{l:vectFuncts}
Let $V$ be a vector space of functions $\F_p^k \to \F_p$ under pointwise operations. Let $\omega = \exp\left( 2\pi i /p\right)$. Say $V(x) = 0$ if $v(x) = 0$ for all $v \in V$. Then 
\[ \Pr_x(V(x) = 0) = \E_{x,v} \omega^{v(x)}.\] 
\end{lemma}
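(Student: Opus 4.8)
The plan is to prove Lemma \ref{l:vectFuncts} by a direct character-orthogonality computation. Write $\omega = \eul^{2\pi i/p}$ and recall the standard fact that for $a \in \F_p$, the average $\E_{c \in \F_p} \omega^{ca} = \b1[a=0]$, since the sum $\sum_{c\in\F_p}\omega^{ca}$ is $p$ when $a=0$ and a geometric series summing to $0$ otherwise. I would apply this with the role of $a$ played by $v(x)$ as $v$ ranges over $V$, but with a twist: we need the \emph{joint} vanishing condition $v(x)=0$ for \emph{all} $v\in V$, not for a single $v$.

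The key observation is that, fixing $x$, the map $v \mapsto v(x)$ is an $\F_p$-linear functional on the finite-dimensional $\F_p$-vector space $V$. Hence its image is a subspace of $\F_p$, so it is either $\{0\}$ (when $V(x)=0$) or all of $\F_p$ (when $V(x)\ne 0$). Therefore
\[
\E_{v\in V}\omega^{v(x)} = \begin{cases} 1 & \text{if } V(x)=0,\\ 0 & \text{if } V(x)\ne 0,\end{cases}
\]
because in the first case every term is $\omega^0=1$, and in the second case $v\mapsto v(x)$ is a surjection onto $\F_p$ whose fibres all have the same size $|V|/p$, so $\E_{v\in V}\omega^{v(x)} = \frac1p\sum_{a\in\F_p}\omega^a = 0$. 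In other words, $\E_{v\in V}\omega^{v(x)} = \b1[V(x)=0]$ pointwise in $x$.

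Taking the average over $x\in\F_p^k$ then gives
\[
\E_{x,v}\,\omega^{v(x)} = \E_x \b1[V(x)=0] = \Pr_x(V(x)=0),
\]
which is the claimed identity. The only mild subtlety — the step I would flag as needing the most care — is the dichotomy that the image of the functional $v\mapsto v(x)$ is either trivial or everything; this uses crucially that $\F_p$ is a field of \emph{prime} order (so its only subspaces as an $\F_p$-vector space are $0$ and itself) rather than a general finite field, which is why one reduces to thinking of $V$ as a genuine vector space and $v\mapsto v(x)$ as a linear map. Everything else is routine character orthogonality, so the write-up can be kept to a few lines.
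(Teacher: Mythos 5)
Your argument is correct and is exactly the ``orthogonality of characters'' computation the paper invokes without detail: for fixed $x$ the average $\E_{v\in V}\omega^{v(x)}$ equals $\b1[V(x)=0]$, and averaging over $x$ gives the identity. One small remark: the dichotomy you flag (the image of the linear functional $v\mapsto v(x)$ is $\{0\}$ or all of $\F_p$) holds over any field, since a field viewed as a vector space over itself has no proper nonzero subspaces; primality of $p$ is only needed so that $\omega^{v(x)}$ is well defined via $\F_p\cong\Z/p\Z$.
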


The second is a standard estimate for Gauss sums.

\begin{lemma}\label{l:gaussSum}
Let $M$ be a symmetric matrix over $\F_p$. Then,
\[ \left| \E_x \omega^{x^\top Mx} \right| =p^{-\frac{1}{2}\rank M}.\]
\end{lemma}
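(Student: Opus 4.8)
The plan is to reduce the $n$-dimensional character sum to a product of one-dimensional quadratic Gauss sums by diagonalising $M$. Since $p$ is odd, every symmetric matrix over $\F_p$ is congruent to a diagonal one: there is an invertible $P \in M_n(\F_p)$ with $P^\top M P = \mathrm{diag}(a_1,\dots,a_n)$, and since congruence preserves rank, exactly $r := \rank M$ of the $a_i$ are nonzero. The substitution $x = Py$ is a bijection of $\F_p^n$, so
\[ \E_x \omega^{x^\top M x} \;=\; \E_y \omega^{\sum_i a_i y_i^2} \;=\; \prod_{i=1}^n \E_{t \in \F_p} \omega^{a_i t^2}. \]
It therefore suffices to show that $\bigl| \E_{t \in \F_p} \omega^{a t^2} \bigr| = 1$ when $a = 0$ (which is immediate) and $= p^{-1/2}$ when $a \neq 0$; multiplying over the $r$ indices with $a_i \neq 0$ then yields $p^{-r/2}$.

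For the one-dimensional sum with $a \neq 0$, write $S_a := \sum_{t \in \F_p} \omega^{a t^2}$, so that the claim becomes $|S_a| = \sqrt{p}$. I would prove this by the standard squaring trick: substituting $t = s + h$ and expanding $t^2 - s^2 = 2sh + h^2$,
\[ |S_a|^2 \;=\; \sum_{s,t \in \F_p} \omega^{a(t^2 - s^2)} \;=\; \sum_{h \in \F_p} \omega^{a h^2} \sum_{s \in \F_p} \omega^{2 a s h}. \]
Because $p$ is odd and $a \neq 0$, the scalar $2a$ is invertible, so the inner sum over $s$ equals $p$ when $h = 0$ and vanishes otherwise; hence $|S_a|^2 = p$, as required. (Alternatively, one may simply quote the classical evaluation of quadratic Gauss sums.)

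I do not expect a genuine obstacle here: the argument is a textbook computation. The only points requiring a word of care are that the congruence-diagonalisation step uses $p$ odd (the case $p = 2$ would need a different normal form and is anyway excluded in Theorem \ref{t:main}), and that one should note explicitly that the number of nonzero diagonal entries produced equals $\rank M$ so that the exponent in the final bound is correct.
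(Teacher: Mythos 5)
Your proof is correct, but it takes a different route from the paper. You first diagonalise $M$ by congruence (valid since $p$ is odd), factor the average into one-dimensional quadratic Gauss sums, and then evaluate $\bigl| \sum_t \omega^{a t^2} \bigr| = \sqrt{p}$ for $a \neq 0$ by the squaring trick. The paper skips the diagonalisation entirely and applies the squaring trick directly in $n$ dimensions: writing $\left| \E_x \omega^{x^\top M x} \right|^2 = \E_{x,h} \omega^{(x+h)^\top M (x+h) - x^\top M x}$, the average over $x$ of $\omega^{2 x^\top M h}$ vanishes unless $2Mh = 0$, i.e.\ unless $h \in \ker M$ (again using $p$ odd), and on that kernel $h^\top M h = 0$, so the square equals $p^{-\rank M}$ in one line. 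Your approach buys a transparent explanation of why the exponent is $\tfrac{1}{2}\rank M$ (one factor of $p^{-1/2}$ per nonzero diagonal entry) and connects the lemma to the classical Gauss sum, at the cost of invoking the congruence normal form of quadratic forms; the paper's computation is shorter and needs no structure theory, only symmetry of $M$ and the orthogonality of additive characters. Both arguments use $p$ odd, which you correctly flag, and your bookkeeping that the number of nonzero $a_i$ equals $\rank M$ is exactly the point needed to land on $p^{-\rank M/2}$.
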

\begin{proof}
The result follows from taking square roots after the following computation: 
\[ \left| \E_x \omega^{x^\top Mx} \right|^2 = \E_{x,h} \omega^{ (x+h)^\top M (x+h) - x^\top Mx} = \E_h \omega^{h^\top Mh} 1_{2Mh=0} = p^{-\rank M}.\]
\end{proof}

\begin{prop}\label{p:boundOnQuads}
We can bound $\Pr_d(\varphi(d) \in W_N)$ as follows: 
\[\Pr_d(\varphi(d) \in W_N) \leqslant \E_{M_v} p^{- \frac{1}{2} \rank M_v},\]
where the expectation is taken over all $M_v : v \in W_N^\perp$.
\end{prop}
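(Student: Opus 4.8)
The plan is to realize the event $\{\varphi(d)\in W_N\}$ as the common zero set of the family of quadratic forms indexed by $W_N^\perp$, and then chain Lemmas \ref{l:vectFuncts} and \ref{l:gaussSum}.

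First I would observe that, since $W_N=(W_N^\perp)^\perp$, we have $\varphi(d)\in W_N$ if and only if $\varphi(d)\cdot v=0$ for every $v\in W_N^\perp$; by the identity $\varphi(d)\cdot v=d^\top M_v d$ recorded above, this is the same as requiring $d^\top M_v d=0$ for all $v\in W_N^\perp$. Let $V$ be the $\F_p$-vector space of functions $\F_p^n\to\F_p$ obtained as the image of $W_N^\perp$ under the linear map $v\mapsto\bigl(d\mapsto d^\top M_v d\bigr)$ (equivalently, the span of those quadratic forms). Then, in the notation of Lemma \ref{l:vectFuncts}, $V(d)=0$ precisely when $\varphi(d)\in W_N$, so $\Pr_d(\varphi(d)\in W_N)=\Pr_d(V(d)=0)$.

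Next I would apply Lemma \ref{l:vectFuncts} to get $\Pr_d(V(d)=0)=\E_d\,\E_{u\in V}\,\omega^{u(d)}$, where $\omega=\exp(2\pi i/p)$. Since the parametrizing map $W_N^\perp\to V$, $v\mapsto\bigl(d\mapsto d^\top M_v d\bigr)$, is a surjective homomorphism of finite abelian groups, it pushes the uniform distribution on $W_N^\perp$ forward to the uniform distribution on $V$; hence $\E_{u\in V}\,\omega^{u(d)}=\E_{v\in W_N^\perp}\,\omega^{d^\top M_v d}$, and after swapping the order of the two finite averages,
\[
\Pr_d(\varphi(d)\in W_N)=\E_{v\in W_N^\perp}\,\E_d\,\omega^{d^\top M_v d}.
\]
Finally, the triangle inequality followed by Lemma \ref{l:gaussSum} (each $M_v$ with $v\in W_N^\perp\subset\F_p^{\binom{n+1}{2}}\cong S_n(\F_p)$ is symmetric) gives
\[
\Pr_d(\varphi(d)\in W_N)\leqslant\E_{v\in W_N^\perp}\left|\E_d\,\omega^{d^\top M_v d}\right|=\E_{v\in W_N^\perp}p^{-\frac12\rank M_v},
\]
which is the asserted bound.

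The only point requiring a little care is the bookkeeping of measures: one must check that averaging $\omega^{u(d)}$ uniformly over the function space $V$ agrees with averaging $\omega^{d^\top M_v d}$ uniformly over $v\in W_N^\perp$. This is immediate from the surjectivity of the parametrizing linear map, so no injectivity statement (and in particular no appeal to $2<p$ to exclude degenerate quadratic forms) is needed; everything else is a direct substitution of the two lemmas.
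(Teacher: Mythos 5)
Your proof is correct and follows essentially the same route as the paper: identify $\{\varphi(d)\in W_N\}$ with the simultaneous vanishing of the quadratic forms $d\mapsto d^\top M_v d$ for $v\in W_N^\perp$, apply Lemma \ref{l:vectFuncts}, swap the averages, and finish with the triangle inequality and Lemma \ref{l:gaussSum}. Your extra remark about pushing the uniform measure from $W_N^\perp$ forward to the function space $V$ is a valid (and slightly more careful) treatment of a point the paper passes over implicitly.
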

\begin{proof}
Using Lemma \ref{l:vectFuncts} (with $V$ the vector space of quadratic forms defined by $\{M_v\}$) and Lemma \ref{l:gaussSum} we can compute that
\begin{align*}
\Pr_d(\varphi(d) \in W_N) &= \Pr_d(d^\top M_v d = 0 \text{ for all } v \in W_N^\perp)\\
&= \Pr_d(V(d) = 0) \\
&= \E_{d,M_v}\omega^{d^\top M_v d} \\
& \leqslant \E_{M_v} \left| \E_d \omega^{d^\top M_v d} \right| \\
& = \E_{M_v}p^{-\frac{1}{2}\rank M_v}.
\end{align*}
\end{proof}

Finally, it remains to show that
\[ \E_{M_v} p^{- \frac{1}{2} \rank M_v}  = o(1/n^2).\]
We do so with the following (crude) observations. Firstly, the number of $n \times n$ matrices of rank at most $r$ is bounded above by $p^{2nr}$ (choose the row space in at most $p^{nr}$ ways and then choose each of the $n$ rows in at most $p^r$ ways). Then, splitting the sum by rank, we have 
\begin{align*}
 \sum_{M_v} p^{- \frac{1}{2} \rank M_v}  &\leqslant \left| \{ M_v : \rank M_v < 5\log_p n\}\right| \\\ & \quad  + p^{-\frac{5}{2}\log_p n}\left|\{ M_v : \rank M_v \geqslant 5\log_p n\}\right| \\
 &\leqslant p^{10n\log_p n} + p^{-\frac{5}{2}\log_p n}\left| \{ M_v\} \right|,
\end{align*}
and so, recalling that $\left|\{M_v\}\right| = p^{\binom{n+1}{2} - N} = p^{11n\log_p n}$,
\[ \E_{M_v}  p^{- \frac{1}{2} \rank M_v}\leqslant p^{-n\log_p n} + p^{-\frac{5}{2}\log_p n}= o(1/n^2).\]

This completes the proof of Theorem \ref{t:main}.

\appendix 

\section{Some points on nilsequences}
We will briefly recall the main objects associated with nilsequences.  Our use of the term nilsequence essentially coincides with the definition of `polynomial nilsequence' in \cite[Definition 4.1]{GTZ12}. We direct an interested or concerned reader there for properly developed definitions and details.

The following definitions are essentially consistent with \cite[Definition 4.1]{GTZ12}. Throughout this section let $G$ be a connected, simply-connected, nilpotent Lie group with Lie algebra $\mathfrak{g}$. Let $\Gamma$ be a lattice (discrete, cocompact subgroup) in $G$, whence $G/ \Gamma$ is a nilmanifold. Let $G_\bullet = (G_i)_{i=0}^{s+1}$ be an $s$-step Lie filtration which is rational with respect to $\Gamma$ in the sense that $\Gamma \cap G_i$ is a lattice in $G_i$ for all $i$. Let $p: [N]\to G$ be a polynomial sequence with respect to $G_\bullet$. Let $\psi$ be a Lipschitz continuous function $G \to \C$ which is $\Gamma$-automorphic (we will often abuse notation and consider $\psi$ as a function on $G/\Gamma$). With this setup, defining $\phi(n):=\psi(p(n)\Gamma)$ yields a nilsequence.

We will also make a couple of minor amendments to \cite[Definition 4.1]{GTZ12}. Firstly, we will also add to the data associated to a nilsequence a Mal'cev basis $\mathscr{B}$ for the Lie algebra, which is adapted to the Lie filtration of $G$. This, in particular, is a basis with respect to which the structure constants of the Lie algebra $\mathfrak{g}$ are rational (the existence of such a basis is due to Mal'cev \cite{M49}). See \cite[Chapter 2]{GT12} and in particular \cite[Definition 2.1]{GT12} and the remarks that follow it for more details and discussion. Also, rather than use the left-invariant Riemannian metric on $G / \Gamma$ as in \cite[Definition 4.1]{GTZ12}, we will borrow the right-invariant metric $d_\mathscr{B}=d$ on $G / \Gamma$ from \cite[Definition 2.2]{GT12}.

We adopt a similar notion of the complexity of a nilsequence to that used in the formulation of the inverse conjecture for the Gowers $U^{s+1}[N]$ norm in \cite[Conjecture 4.5]{GTZ12}. When we refer to a nilsequence $\phi$ as having complexity bounded by $C>0$, we take as part of the definition that the following are also bounded by $C$:
\begin{itemize}
\item the dimension of $G$,
\item the heights of the (rational) structure constants of the Lie bracket operation  with respect to $\mathscr{B}$,
\item the heights of the (rational) coordinates of $\log(g_\Gamma)$ with respect to $\mathscr{B}$ for all $g_\Gamma \in S_\Gamma$, where $S_\Gamma$ is some generating set for $\Gamma$, 
\item the Lipschitz constant of $\psi$ (with respect to the metric $d$ in the domain), and
\item $\Vert \psi \Vert_\infty$.
\end{itemize} 

The goal of the remainder of the appendix is to prove Proposition \ref{p:nilEntropy}. Our first lemma is standard and follows from the definitions above.

\begin{lemma}\label{l:finNilman}
Let $C>0$ and let $\phi$ be an $s$-step nilsequence of complexity at most $C$. Then there are $O_{s,C}(1)$ possibilities for the Lie filtration $G_\bullet$ associated to $\phi$, and $O_{C}(1)$ possibilities for the lattice $\Gamma$. 
\end{lemma}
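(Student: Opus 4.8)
The plan is to unpack the list of quantities bounded by $C$ in the definition of complexity and observe that each one, once bounded, leaves only finitely many choices. First I would handle the Lie filtration $G_\bullet$. Since the complexity bound $C$ controls the dimension of $G$ and the heights of the rational structure constants of the Lie bracket with respect to the Mal'cev basis $\mathscr{B}$, there are only $O_{s,C}(1)$ possibilities for the isomorphism type of the filtered Lie algebra $(\mathfrak{g}, G_\bullet)$: the dimension lies in a bounded range, each structure constant is a rational with bounded height (hence drawn from a finite set), and the filtration $G_\bullet = (G_i)_{i=0}^{s+1}$ is a chain of at most $s+2$ subspaces of the bounded-dimensional space $\mathfrak g$, each spanned by a sub-collection of basis elements (since $\mathscr{B}$ is adapted to the filtration). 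Exponentiating back to $G$ via the Baker--Campbell--Hausdorff formula — which terminates after $s$ steps and has coefficients determined by the structure constants — shows there are $O_{s,C}(1)$ possibilities for $G_\bullet$.

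Next I would treat the lattice $\Gamma$. Having fixed $G$ (one of boundedly many), the lattice $\Gamma$ is determined by a generating set $S_\Gamma$, and the complexity bound controls the heights of the coordinates of $\log(g_\Gamma)$ with respect to $\mathscr{B}$ for each $g_\Gamma \in S_\Gamma$. Thus each generator lies in a finite set of points of $G$ cut out by the height bound, and (after noting that the number of generators needed for a lattice in a Lie group of bounded dimension is itself bounded) there are only $O_C(1)$ possible generating sets, hence $O_C(1)$ possibilities for $\Gamma$. One should also check that a bounded generating set with bounded-height logarithmic coordinates does generate a lattice rational with respect to the fixed filtration — but this is part of the data we are given, so we only need to count, not verify.

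The main obstacle — or rather, the only point requiring care — is making precise the passage from ``bounded height'' to ``finite set.'' A rational number of height at most $C$ lies in the finite set $\{a/b : |a|, |b| \le C\}$, so any finite-dimensional array of such rationals (structure constants, logarithmic coordinates of generators) ranges over a finite set whose size depends only on $C$ and the dimensions involved, and the latter are themselves bounded by $s$ and $C$. Assembling these counts multiplicatively gives the claimed $O_{s,C}(1)$ and $O_C(1)$ bounds. Since the problem is purely bookkeeping once one adopts the standard conventions for heights and Mal'cev bases recalled above, I would present this at the level of detail above and refer to \cite[Chapter 2]{GT12} for the quantitative Lie-theoretic facts invoked.
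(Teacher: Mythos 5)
Your argument is correct and is exactly the intended one: the paper offers no written proof beyond asserting that the lemma ``follows from the definitions,'' and your bookkeeping --- bounded dimension, bounded-height rationals forming finite sets for the structure constants and for the logarithmic coordinates of the generators of $\Gamma$, and the filtration being pinned down (via the adapted basis) by boundedly many dimension choices --- is precisely the standard verification being alluded to. No gaps; the parenthetical about bounding the number of generators is not even needed, since the generators already range over a finite set of size $O_C(1)$.
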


The following lemma is the key ingredient in the proof of Proposition \ref{p:nilEntropy}.

\begin{lemma}[{\cite[Lemma B.7]{BGSZ16}}]\label{l:polyPolyNilman}
Let $G / \Gamma$ be an $s$-step nilmanifold and let $\eps \in (0,1/2)$. There exists a set $P$ of $N^{O_{s,\eps} (1)}$ polynomial sequences $p':\Z \to G$ such that for every polynomial sequence $p$ in $G$ there exists $p' \in P$ with $d(p(n)\Gamma,p'(n)\Gamma) < \eps$ for all $n \in [N] $.
\end{lemma}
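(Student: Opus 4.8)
The plan is to put polynomial sequences into binomial (Taylor) form and then discretize their finitely many coefficients. By the Lazard--Leibman theory of polynomial sequences on nilmanifolds (see e.g. \cite{GT12}), any polynomial sequence $p$ with respect to the filtration $G_\bullet$ of $G/\Gamma$ has a unique representation
\[ p(n) = g_0\,g_1^{\binom{n}{1}}\,g_2^{\binom{n}{2}}\cdots g_s^{\binom{n}{s}},\qquad g_i\in G_i, \]
so $p$ is determined by the tuple $(g_0,\dots,g_s)$, equivalently by the Mal'cev coordinates $\psi(g_i)$ with respect to the adapted basis $\mathscr{B}$ (only the coordinates in the directions spanning $G_i$ being relevant for $g_i$). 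The set $P$ will be the set of $p'$ whose coefficients $g_i'\in G_i$ are taken from a finite grid inside a bounded box; there are two things to arrange: (i) a normalization showing that we may assume each $g_i$ lies in such a box without changing the orbit $n\mapsto p(n)\Gamma$ on $[N]$, and (ii) a choice of grid fine enough that replacing $(g_i)$ by the nearest grid tuple $(g_i')$ perturbs that orbit by less than $\eps$ in the metric $d$ for every $n\in[N]$. (All implied constants below may depend on the fixed nilmanifold $G/\Gamma$.)

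For (i): since $d$ is the right-invariant metric descended to $G/\Gamma$, the orbit $n\mapsto p(n)\Gamma$ is unchanged by right multiplication of $p$ by a fixed element of $\Gamma$, and more generally by the reduction of a polynomial sequence to canonical form modulo $\Gamma$ one may replace each $g_i$ by a representative of $g_i(\Gamma\cap G_i)$, absorbing the resulting corrections into the higher-order coefficients $g_{i+1},\dots,g_s$. The top case illustrates the mechanism cleanly: $G_s$ is central in $G$ (since $[G,G_s]\subseteq G_{s+1}=\{e\}$), so replacing $g_s$ by $g_s\gamma_s$ with $\gamma_s\in\Gamma\cap G_s$ replaces $p(n)$ by $p(n)\gamma_s^{\binom{n}{s}}\in p(n)\Gamma$; the general case proceeds by downward induction on $i$. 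As $\Gamma\cap G_i$ is cocompact in $G_i$, we may thus assume each $g_i$ lies in a fixed fundamental domain for $(\Gamma\cap G_i)\backslash G_i$, so that in the adapted Mal'cev coordinates the relevant coordinates of $g_i$ all lie in $[0,1)$. Because $G/\Gamma$ is fixed, the total number of relevant coordinates is $m=\dim G$, a constant.

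For (ii): by the coordinate calculus on nilmanifolds (\cite[Chapter 2]{GT12}), the group law $\psi(xy)$ is a polynomial in $\psi(x),\psi(y)$ with rational coefficients of bounded height, and for $g$ confined to a bounded set the power map $t\mapsto\psi(g^t)$ is a polynomial in $t$ of degree at most $s$ with bounded coefficients. Hence for $n\in[N]$ the coordinates of $p(n)=g_0 g_1^{\binom{n}{1}}\cdots g_s^{\binom{n}{s}}$ are a fixed polynomial expression in the coordinates $\{\psi(g_i)\}$ (which lie in the unit box) and in the integers $\binom{n}{i}\leqslant N^s$; so $\psi(p(n))$ and its partial derivatives in the $\psi(g_i)$ are $O(N^{O_s(1)})$. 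Consequently, if each $\psi(g_i)$ is moved by at most $\delta$ while staying in the unit box and in the coordinate subspace cut out by $G_i$ — so that $g_i'\in G_i$ and, via the Taylor form, $p'$ is again a polynomial sequence with respect to $G_\bullet$ — then $\|\psi(p(n))-\psi(p'(n))\|=O(N^{O_s(1)}\delta)$ for all $n\in[N]$. Since $d(x\Gamma,y\Gamma)$ is bounded by a bounded power of the coordinate discrepancy between suitable integer translates of lifts of $x$ and $y$, times a factor polynomial in the size of those coordinates (\cite[Definition 2.2]{GT12} and the surrounding estimates), taking $\delta$ to be a sufficiently small negative power of $N$ times $\eps$ forces $d(p(n)\Gamma,p'(n)\Gamma)<\eps$ for all $n\in[N]$. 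The number of $\delta$-grid points in $[0,1)^m$ is $(1/\delta)^m=N^{O_{s,\eps}(1)}$, which bounds $|P|$.

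The main obstacle is step (i): making precise that every polynomial orbit on $[N]$ agrees with one whose Taylor coefficients lie in a fixed bounded box. This is exactly the reduction to canonical form in the theory of polynomial orbits on nilmanifolds, and some care is needed to track the corrections that accumulate in the higher-order coefficients as one normalizes $g_0,g_1,\dots$ in turn. Once that reduction is available, step (ii) is a routine Lipschitz estimate in Mal'cev coordinates and the final count is immediate. (A minor point already noted: the grid perturbation $p'$ is automatically a polynomial sequence with respect to $G_\bullet$, since any tuple $(g_0',\dots,g_s')$ with $g_i'\in G_i$ defines one through the binomial form.)
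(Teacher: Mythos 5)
You should first note a point of comparison: the paper does not prove Lemma \ref{l:polyPolyNilman} at all --- it is imported verbatim from \cite[Lemma B.7]{BGSZ16} --- so there is no internal argument to measure your proposal against. Judged on its own terms, your outline takes the natural route for this kind of counting statement: write $p$ in Taylor form $p(n)=g_0g_1^{\binom{n}{1}}\cdots g_s^{\binom{n}{s}}$ with $g_i\in G_i$, normalize the coefficients into fundamental domains of $(\Gamma\cap G_i)\backslash G_i$ without changing $n\mapsto p(n)\Gamma$, then take a $\delta$-net of the resulting bounded coefficient box with $\delta=\eps N^{-O_s(1)}$, using the polynomial bounds on the group operations in Mal'cev coordinates and the comparison between $d$ and coordinate distance from \cite{GT12}. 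The dependence of the count on $\dim G$, which you flag, is harmless: the lemma is only applied after Lemma \ref{l:finNilman} has restricted attention to $O_{s,C}(1)$ nilmanifolds of bounded complexity, so constants depending on $G/\Gamma$ are absorbed into the $O_{s,\eps,C}(1)$ of Proposition \ref{p:nilEntropy}.

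The one genuine soft spot is your step (i), which you acknowledge but do not carry out, and which is really the heart of the lemma. Two remarks. First, the induction direction as literally stated (``downward induction on $i$'') is the wrong way around: writing $g_i=\{g_i\}[g_i]$ with $[g_i]\in\Gamma\cap G_i$ and right-multiplying $p$ by the $\Gamma$-valued polynomial sequence $n\mapsto[g_i]^{-\binom{n}{i}}$, the re-expansion into Taylor form leaves the coefficients of order $<i$ untouched, replaces the order-$i$ coefficient by $\{g_i\}$, and dumps commutator corrections into the coefficients of order $>i$; so one must normalize in increasing order $i=0,1,\ldots,s$ (your closing parenthetical ``as one normalizes $g_0,g_1,\dots$ in turn'' has it right), whereas proceeding from the top down would destroy the coefficients already normalized. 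Second, the fact you need --- every polynomial sequence adapted to $G_\bullet$ factors as $p=p'\gamma$ with $\gamma$ a polynomial sequence taking values in $\Gamma$ and $p'$ having Taylor coefficients in fixed fundamental domains --- is standard (it is part of the factorization machinery for polynomial orbits, cf.\ \cite{GT12}), but in a self-contained proof it requires either the full inductive bookkeeping just described or an explicit citation; as written it is asserted rather than proved. With that step supplied, your step (ii) and the final count $(1/\delta)^{\dim G}=N^{O_{s,\eps}(1)}$ go through as you describe, including the observation that grid points chosen in the coordinate subspaces corresponding to the $G_i$ again define polynomial sequences adapted to $G_\bullet$.
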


The final ingredient is a consequence of the Arzel{\` a}--Ascoli theorem, or can be verified from first principles.

\begin{lemma}\label{l:finLipschitz}
Let $\eps > 0$ and let $G/ \Gamma$ be a nilmanifold associated to a nilsequence as above.  Let $\mathcal{F}$ be the family of Lipschitz functions $G / \Gamma \to \C$ which are bounded by $C$ and have Lipschitz constant at most $C$. Then there exists a constant $K_{\eps,C}$ and a set $\mathcal{F}'$ of cardinality $O_{\eps,C}(1)$ containing Lipschitz functions $G / \Gamma \to \C$ which are bounded by $C$ and have Lipschitz constant at most $K_{\eps,C}$ with the following property. For every $\psi \in \mathcal{F}$, there exists $\psi' \in \mathcal{F}'$ with $|\psi(g) - \psi'(g)| < \eps$ for all $g \in G / \Gamma$.
\end{lemma}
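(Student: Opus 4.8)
The statement is a quantitative form of the Arzel\`a--Ascoli theorem, and the plan is to exploit that the nilmanifold $G/\Gamma$ is a compact metric space under the metric $d$ (since $\Gamma$ is cocompact in $G$), hence totally bounded. First I would fix parameters $\delta,\eta>0$, to be pinned down at the end in terms of $\eps$ and $C$, and choose a finite $\delta$-net $\{x_1,\dots,x_m\}$ of $G/\Gamma$; total boundedness guarantees $m<\infty$, with $m$ depending only on $G/\Gamma$ and $\delta$. Since every $\psi\in\mathcal F$ takes values in the closed disk $D_C=\{z\in\C:|z|\leqslant C\}$, I would also fix a finite $\eta$-net $\Lambda$ of $D_C$, so that $|\Lambda|=O_{\eta,C}(1)$.

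Next, call a function $\tau\colon\{x_1,\dots,x_m\}\to\Lambda$ a \emph{type}, and for each type let $S_\tau=\{\psi\in\mathcal F:|\psi(x_i)-\tau(x_i)|\leqslant\eta\text{ for all }i\}$. For every type with $S_\tau\neq\varnothing$, pick one representative $\psi_\tau\in S_\tau$, and set $\mathcal F'=\{\psi_\tau:S_\tau\neq\varnothing\}$. Then $\mathcal F'\subset\mathcal F$, so its members are automatically bounded by $C$ and $C$-Lipschitz (so one may simply take $K_{\eps,C}=C$), and $|\mathcal F'|\leqslant|\Lambda|^m=O_{\eps,C}(1)$ once $\delta$ and $\eta$ have been chosen as functions of $\eps$ and $C$.

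For the approximation property, given $\psi\in\mathcal F$ I round each value $\psi(x_i)$ to a nearest point of $\Lambda$ to obtain a type $\tau$ with $\psi\in S_\tau$; then both $\psi$ and $\psi_\tau$ lie in $S_\tau$. For an arbitrary $g\in G/\Gamma$, choose a net point $x_i$ with $d(g,x_i)<\delta$ and apply the triangle inequality together with the $C$-Lipschitz bounds:
\[
|\psi(g)-\psi_\tau(g)|\leqslant |\psi(g)-\psi(x_i)|+|\psi(x_i)-\psi_\tau(x_i)|+|\psi_\tau(x_i)-\psi_\tau(g)|\leqslant 2C\delta+2\eta,
\]
using $|\psi(x_i)-\psi_\tau(x_i)|\leqslant|\psi(x_i)-\tau(x_i)|+|\tau(x_i)-\psi_\tau(x_i)|\leqslant 2\eta$. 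Choosing $\delta=\eps/(4C)$ and $\eta=\eps/4$ makes the right-hand side strictly less than $\eps$, which is what is required.

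I do not expect a genuine obstacle here: the only nontrivial input is total boundedness of $G/\Gamma$ in the metric $d$, which is part of the nilmanifold structure, and the rest is bookkeeping. One point to record is that all implied constants depend only on $\eps$, $C$, and the fixed nilmanifold $G/\Gamma$; this suffices because in the application Lemma \ref{l:finNilman} leaves only $O_{s,C}(1)$ nilmanifolds to consider. Finally, if one prefers $\mathcal F'$ to be described explicitly rather than via arbitrary representatives, one may instead take $\psi_\tau$ to be the McShane--Whitney Lipschitz extension to $G/\Gamma$ of the prescribed grid values $\tau$ on the net, at the cost of a larger (but still $O_{\eps,C}(1)$) Lipschitz constant $K_{\eps,C}$; this is the reason the statement permits $K_{\eps,C}>C$.
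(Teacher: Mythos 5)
Your net-and-types argument is correct and is precisely the ``first principles'' verification of the compactness/Arzel\`a--Ascoli statement that the paper invokes without proof, so it matches the intended approach (including the observation that the covering numbers depend only on $\eps$, $C$ and the fixed nilmanifold, of which there are only $O_{s,C}(1)$ by Lemma \ref{l:finNilman}). The only cosmetic point is that with $\delta=\eps/(4C)$, $\eta=\eps/4$ your bound is $2C\delta+2\eta=\eps$; strictness is rescued because $d(g,x_i)<\delta$ is strict (or one can simply shrink $\delta$), so there is no real gap.
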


\begin{proof}[Proof of Proposition \ref{p:nilEntropy}]
By Lemma \ref{l:finNilman}, there are $O_{s,C}(1)$ Lie groups, Lie filtrations and lattices corresponding to nilsequences of complexity at most $C$. Henceforth fix a particular nilmanifold $G/ \Gamma$. Let $\phi(n) = \psi(p(n)\Gamma)$ be a nilsequence of complexity at most $C$ with nilmanifold $G/ \Gamma$. Define $p'$ to be the polynomial sequence in $G$ produced by Lemma \ref{l:polyPolyNilman} with parameter $\frac{\eps}{2C}$. Also, define $\psi'$ to be the Lipschitz function $G / \Gamma \to \C$ produced by Lemma \ref{l:finLipschitz} with parameter $\eps/2$. Define the nilsequence $\phi'$ by $\phi'(n) = \psi'(p'(n)\Gamma)$. It follows that, 
\begin{align*}
|\phi(n) - \phi'(n)|  &= |\psi(p(n)\Gamma) - \psi'(p'(n)\Gamma)|\\
&\leqslant |\psi(p(n)\Gamma) - \psi(p'(n)\Gamma)| + |\psi(p'(n)\Gamma) - \psi'(p'(n)\Gamma)|\\
&< C d(p(n)\Gamma, p'(n)\Gamma) + \eps/2 \\
&< \eps,
\end{align*}
for all $n \in [N]$. Since we have chosen from a family of $N^{O_{s,\eps, C}(1)}$ such polynomial sequences $p'$ and $O_{\eps,C}(1)$ such Lipschitz functions $\psi'$, we have chosen $\phi'$ from a family of cardinality $N^{O_{s,\eps, C}(1)}$. It is also clear that these nilsequences have complexity $O_{s,\eps, C}(1)$. The result follows.
\end{proof}

\section*{Acknowledgments}
The author is indebted to his supervisor Ben Green for his guidance and support on this project. 

\bibliographystyle{alpha}
\nocite{*}
\bibliography{sze}

\end{document}